\DeclareMathOperator{\rank}{rank}
\DeclareMathOperator{\trace}{trace}
\DeclareMathOperator*{\spann}{span}
\DeclareMathOperator{\Pol}{Pol}
\theoremstyle{definition}\newtheorem{definition}{Definition}[section]\newtheorem{remark}[definition]{Remark}
\theoremstyle{plain}\newtheorem{theorem}[definition]{Theorem}\newtheorem{lemma}[definition]{Lemma}\newtheorem{proposition}[definition]{Proposition}
\newcommand{\R}{\mathbb{R}}\newcommand{\C}{\mathbb{C}}
\newcommand{\Gkd}{{\mathcal G}_{k,d}}
\begin{document}

\title{Signal reconstruction from the magnitude of subspace components}

\author[C.~Bachoc]{Christine Bachoc}

\address[C.~Bachoc]{Univ.~Bordeaux, IMB, UMR 5251, F-33400 Talence, France}
\email{Christine.Bachoc@math.u-bordeaux1.fr}

\author[M.~Ehler]{Martin Ehler}
\address[M.~Ehler]{University of Vienna,
Department of Mathematics, 
Oskar-Morgenstern-Platz 1
A-1090 Vienna
} 
\email{martin.ehler@univie.ac.at}
\begin{abstract}
We consider signal reconstruction from the norms of subspace
components generalizing standard phase retrieval problems. In the
deterministic setting, a  closed reconstruction formula is derived when
the subspaces satisfy certain cubature conditions, that require at
least a quadratic number of subspaces.
Moreover, we address reconstruction under the erasure of a subset of the
norms; using the concepts of $p$-fusion frames and list decoding, we
propose an algorithm 
that outputs a finite list of candidate signals, one of which is the
correct one. 
In the random setting, we show that a set of subspaces chosen at
random and of cardinality
scaling  linearly in the ambient dimension allows for exact
reconstruction with high probability  by solving the feasibility problem of a semidefinite program.
\end{abstract}



\maketitle

\section{Introduction}

%

The phase retrieval problem, which refers to the task of recovering a
signal from the absolute values of linear measurements, has received
much attention recently: see
\cite{Balan:2009fk,Balan:2006fk,Candes:2011fk,Candes:uq,Chai:2011fk,Waldspurger:2012uq}
to mention only few. We are dealing with a generalization, in which
the measurements consist of norms of projections of the signal onto
$k$-dimensional subspaces. For $k=1$, our setting reduces to the
classical phase retrieval problem. 

%
%
%
%



Here, we pose the following questions: Under which properties of the subspaces can we reconstruct the original signal from the norms of its $k$-dimensional subspace components by means of a closed formula? Also, given that requiring a closed formula for reconstruction is too costly, can we develop strategies to reduce the number of required subspace components, under a numerical reconstruction?  We shall provide positive xanswers for a deterministic choice and a random choice of subspaces.

\smallskip
\noindent
\textbf{Deterministic setting:} 
Given $k$-dimensional linear subspaces $\{V_j\}_{j=1}^n$ in $\R^d$, we
aim to reconstruct the signal $x\in \R^d$ from
$\{\|P_{V_j}(x)\|\}_{j=1}^n$, where $P_{V_j}$ denotes the orthogonal
projector onto $V_j$. Clearly, $x$ can only be recovered up to its sign. 
In \cite{Cahill:2013fk}, several characterizations of subspaces such
that the mapping $\{\pm x\}\mapsto \{\|P_{V_j}(x)\|\}_{j=1}^n$ is injective
are given, see also \cite{Bahmanpour:bf}. Our aim is to showcase properties of the subspaces that
moreover allow for an explicit reconstruction formula. 

If there are positive weights 
$\{\omega_j\}_{j=1}^n$ such that $\{(V_j,\omega_j)\}_{j=1}^n$ yields a so-called cubature of strength $4$ as defined in Section \ref{sec:cub} of the present paper,  then we shall obtain a closed reconstruction formula for $xx^*$ enabling us to extract $\pm x$. Thus, we extend the $1$-dimensional results in \cite{Balan:2009fk} to $k$-dimensional projections. Note that the authors in \cite{Balan:2009fk} require cubatures for the projective space whose weights are $\omega_j=1/n$, i.e., so-called projective designs. In practice, however, the choice of subspaces may underlie restrictions that prevent them from being a design. Therefore, our results are a significant improvement for $1$-dimensional projections already. 

To address subspace erasures, we suppose that we are only given the values of $n-p$ norms 
and we need to reconstruct the missing $p$ norms. Notice that our input are not the subspace components but their norms, as opposed to signal reconstruction under the erasures discussed in \cite{Bodmann:2005uq,Holmes:2004fk,Kutyniok:2009aa}. If there are positive weights $\{\omega_j\}_{j=1}^n$ such that $\{(V_j,\omega_j)\}_{j=1}^n$ forms a tight $p$-fusion frame as recently introduced in \cite{Bachoc:2010aa}, then the computation of the erased norms up to permutations  amounts to solving a system of algebraic equations. We can then reconstruct $\pm x$ from the entire set of $n$ magnitude subspace components. 
In other words, we found conditions on subspaces, so that we can compute a finite list of candidate signals, one of which is the correct one. The latter is a form of list decoding as introduced in \cite{Elias:1991fk}.

\smallskip
The limit of this deterministic approach stands in the required number
of subspaces. Indeed, it is known that the cardinality of a cubature
formula of strength $4$ scales at least quadratically with the ambient
dimension $d$. In the random setting, it will be possible to reduce
the number of subspaces
to linear size:

\smallskip
\noindent
\textbf{Random setting:} We shall extend to $k$-dimensional subspaces
the results obtained for $k=1$ in a recent series of papers
\cite{Candes:uq, Demanet:2012uq, Candes:2012fk}.
In  \cite{Candes:uq} it was shown  that semidefinite programming
yields signal recovery with high probability when the $1$-dimensional
subspaces are chosen at random and that the cardinality of the
subspaces can scale linearly in the ambient dimension up to a
logarithmic factor. Numerical stability in the presence of noise was also verified. 
The underlying semidefinite program was shown in \cite{Demanet:2012uq}
to afford (with high probability) a unique feasible solution, and the logarithmic factor was removed in \cite{Candes:2012fk}. 

Our proof for $k$-dimensional subspaces  (see Theorem
\ref{th:finale!}) follows the approach in
\cite{Candes:2012fk,Candes:uq}. We verify that randomly selected subspaces satisfy a near isometry property and ensure the existence of a so-called dual certificate, which implies that the solution of the semidefinite program indeed recovers the signal with high probability. However, the generalization to $k$-dimensional projections raised additional difficulties. Indeed, the case $k=1$ relies on
random vectors whose entries are i.i.d.~Gaussian modeling the
measurements. For $k>1$, we must deal with measurement matrices having
orthogonal rows, so that entries from one row are stochastically
dependent on those in any other row. Hence, the extension from $k=1$
to $k>1$ is not obvious and requires special care. In particular, we apply weak convergence results from random matrix theory to derive the lower estimate for the near isometry property, see our Proposition \ref{lemma:ck}. Moreover, we present numerical experiments to illustrate the practical feasibility of the method in small dimensions. 

\smallskip
\noindent
\textbf{Complex case:}
Although we present our results for real signals and subspaces exclusively, the agenda can also be followed in the complex setting. We shall discuss the required modifications at the end of the present paper.

\bigskip

We would like to mention that signal reconstruction from phaseless measurements is a common problem in optical physics such as X-ray crystallography and diffraction imaging, where coherent light sources correspond to magnitude measurements of Fourier frame coefficients. Crystal twinning \cite{Drenth:2010fk}, on the other hand, involves signal reconstruction from averaged diffraction patterns by means of incoherent addition of $k$ wavefields, where usually $k=1,2,3$ \cite{Elser:2008fk}. Indeed, incoherent light sources involve (weighted) sums of $k$ squared moduli of Fourier coefficients. The latter is essentially the squared norm of the orthogonal projection onto the associated $k$-dimensional subspace. Hence, our mathematical setting of rank-$k$ orthogonal projectors, where $k$ is independent of the ambient dimension $d$, relates to measurements in optical physics. Nonetheless, we should mention that we do not focus on Fourier type measurements and do not incorporate any add
 itional information and side constraints that are commonly available in optical physics measurements and that are used in standard reconstruction algorithms, see \cite{Fienup:1982vn,Gerchberg:1972kx} and also \cite{Bauschke:2002ys}.

%


\smallskip
\noindent
\textbf{Outline:}
In Section \ref{sec:1}, we recall fusion frames, state the phase retrieval problem, and introduce tight $p$-fusion frames and cubature formulas. We present the closed  reconstruction formula in Section \ref{sec:exact} and  our reconstruction algorithm 
in presence of erasures in Section \ref{sec:3}. The random subspace selection is addressed in Section \ref{sec:prefinal}. Numerical experiments are presented in Section \ref{sec:final}, and we discuss the complex setting in Section \ref{sec:complex}.

\section{Fusion frames, phase retrieval, and cubature formulas}\label{sec:1}
\subsection{Fusion frames and the problem of reconstruction without phase}
Let $\mathcal{G}_{k,d}=\mathcal{G}_{k,d}(\R)$ denote the \emph{real Grassmann space}, i.e., the $k$-dimensional subspaces of $\R^d$. Each $V\in \mathcal{G}_{k,d}$ can be identified with the orthogonal projector onto $V$, denoted by $P_V$. Let $\{V_j\}_{j=1}^n\subset \mathcal{G}_{k,d}$ and let $\{\omega_j\}_{j=1}^n$ be a collection of positive weights. Then $\{(V_j,\omega_j)\}_{j=1}^n$ is called a \emph{fusion frame} if there are positive constants $A$ and $B$ such that
\begin{equation}\label{eq:fusion def}
A\|x\|^2 \leq \sum_{j=1}^n \omega_j\|P_{V_j}(x)\|^2 \leq B \|x\|^2, \text{ for all $x\in\R^d$,}
\end{equation}
cf.~\cite{Casazza:2008aa}. The condition \eqref{eq:fusion def} is equivalent to 
\begin{equation}\label{eq:tight fusion def}
A\leq \sum_{j=1}^n \omega_j\langle P_{x},P_{V_j}\rangle \leq B, \text{ for all $x\in S^{d-1}$,}
\end{equation}
where $P_x$ is short for $P_{x\R}$ and $\langle P_{x},P_{V_j}\rangle:=\trace(P_xP_{V_j})$ is the standard inner product between self-adjoint operators. If $A=B$, then $\{(V_j,\omega_j)\}_{j=1}^n$ is called a \emph{tight fusion frame}, and any signal $x\in S^{d-1}$ can be reconstructed from its subspace components by the simple formula 
\begin{equation}\label{eq:tight reconstr}
x = \frac{1}{A}\sum_{j=1}^n  \omega_j P_{V_j}(x). 
\end{equation}
If, however, instead of $\{P_{V_j}(x)\}_{j=1}^n$ we only observe the norms $\{\|P_{V_j}(x)\|\}_{j=1}^n$ and, worse, we even lose some of these norms, can we still reconstruct $x$? Clearly, $x$ can be determined up to its sign at best. In the present paper, we find conditions on $\{(V_j,\omega_j)\}_{j=1}^n$ together with a computationally feasible algorithm that enable us to determine $\pm x$. 

\begin{remark}
We want to point out that $1$-bit compressed sensing, cf.~\cite{Boufounos:2008qd,Plan:2011jk}, deals with a problem that is complementary to phase retrieval. There, the magnitudes are unknown and signals are reconstructed from the signs of the frame coefficients. 
\end{remark}

\subsection{Tight $p$-fusion frames}

Let $\{V_j\}_{j=1}^n\subset \mathcal{G}_{k,d}$ and let $\{\omega_j\}_{j=1}^n$ be a collection of positive weights and $p$ a positive integer. Then $\{(V_j,\omega_j)\}_{j=1}^n$ is called a \emph{$p$-fusion frame} in \cite{Bachoc:2010aa} if there exist positive constants $A_p$ and $B_p$ such that
\begin{equation}\label{eq:p fusion def}
A_p\|x\|^{2p}\leq \sum_{j=1}^n \omega_j\|P_{V_j}(x)\|^{2p} \leq B_p\|x\|^{2p}, \text{ for all $x\in\R^d$,}
\end{equation}
see also \cite{Okoudjou:2010aa} for related concepts. 
If $A_p=B_p$, then $\{(V_j,\omega_j)\}_{j=1}^n$ is called a \emph{tight $p$-fusion frame}. As with \eqref{eq:fusion def} and \eqref{eq:tight fusion def}, the condition \eqref{eq:p fusion def} is equivalent to 
\begin{equation}\label{eq:p fusion def2}
A_p\leq \sum_{j=1}^n \omega_j\langle P_x, P_{V_j}\rangle^{p} \leq B_p, \text{ for all $x\in S^{d-1}$.}
\end{equation}
If $\{(V_j,\omega_j)\}_{j=1}^n$ is a tight $p$-fusion frame, then it is also a tight $\ell$-fusion frame for all integers $1\leq \ell\leq p$, and the tight $\ell$-fusion frame bounds are 
\begin{equation}\label{eq:new bound formula}
A_\ell= \frac{(k/2)_\ell}{(d/2)_\ell}\sum_{j=1}^n\omega_j,
\end{equation}
where we used $(a)_\ell=a(a+1)\cdots (a+\ell-1)$, cf.~\cite{Bachoc:2010aa}. We also refer to \cite{Bachoc:2010aa} for constructions and general existence results. 

\subsection{Cubature formulas}\label{sec:cub}
The real orthogonal group $O(\R^d)$ acts transitively on $\mathcal{G}_{k,d}$, and the Haar measure on $O(\R^d)$ induces a probability measure $\sigma_k$ on $\mathcal{G}_{k,d}$. Let $L^2(\Gkd)$ denote the complex valued functions on $\mathcal{G}_{k,d}$, whose squared module is integrable with respect to $\sigma_k$. The complex irreducible representations of $O(\R^d)$ are associated to partitions
$\mu=(\mu_1,\dots,\mu_d)$, $\mu_1\geq \ldots\geq \mu_d\geq 0$, denoted by 
$V_d^{\mu}$, cf.~\cite{Goodman:1998fk}. Let $l(\mu)$ be the number of nonzero entries in $\mu$ so that
\begin{equation}\label{dec L2}
L^2(\Gkd)=\bigoplus_{ l(\mu)\leq k} H_{k,d}^{2\mu}, \quad\text{ where } H_{k,d}^{2\mu} \simeq V_d^{2\mu},
\end{equation}
see \cite{Goodman:1998fk}. 
The space of polynomial functions on $\mathcal{G}_{k,d}$ of degree bounded by $2p$ is 
\begin{equation}\label{eq:polys}
\Pol_{\leq 2p} (\Gkd):=\bigoplus_{l(\mu)\leq k,\ |\mu|\leq p} H_{k,d}^{2\mu},
\end{equation}
and we additionally define the subspace 
\begin{equation}\label{eq:polys diag}
\Pol^1_{\leq 2p} (\Gkd):=\bigoplus_{l(\mu)\leq 1,\ |\mu|\leq p} H_{k,d}^{2\mu}.
\end{equation}
These spaces are explicitly given by
\begin{align*}
\Pol_{\leq 2p}(\Gkd) & = \spann \{V\mapsto \langle  P_{x_1},P_V\rangle \cdots\langle  P_{x_p},P_V\rangle : x_1,\ldots,x_p\in S^{d-1} \},\\
\Pol^1_{\leq 2p}(\Gkd) & = \spann \{V\mapsto \langle  P_{x},P_V\rangle^p : x\in S^{d-1} \},
\end{align*}
cf.~\cite[Remark 5.4, proof of Theorem 5.3]{Bachoc:2010aa}. 
Let $\{V_j\}_{j=1}^n\subset\Gkd$ and $\{\omega_j\}_{j=1}^n$ be a collection of positive weights normalized such that $\sum_{j=1}^n \omega_j=1$. Then $\{(V_j,\omega_j)\}_{j=1}^n$ is called a \emph{cubature of strength $2p$ for $\mathcal{G}_{k,d}$} if
\begin{equation}\label{def cubature formula}
\int_{\Gkd} f(V)d\sigma_k(V)= \sum_{j=1}^n \omega_j f(V_j) \quad \text{ for all } f\in \Pol_{\leq 2p} (\Gkd).
\end{equation}
Grassmannian designs, i.e., cubatures with constant weights, have been studied in \cite{Bachoc:2005aa,Bachoc:2006aa,Bachoc:2004fk,Bachoc:2002aa}. For existence results on cubatures and the relations between $p$ and $n$, we refer to \cite{Harpe:2005fk}. 
It was verified in \cite{Bachoc:2010aa} that $\{(V_j,\omega_j)\}_{j=1}^n$ is a tight $p$-fusion frame if and only if
\begin{equation*}
\int_{\Gkd} f(V)d\sigma_k(V)= \sum_{j=1}^n \omega_j f(V_j) \quad \text{ for all } f\in \Pol^1_{\leq 2p} (\Gkd).
\end{equation*}
Thus, any cubature of strength $2p$ is a tight $p$-fusion frame. The converse implication does not hold in general except for $p$ or $k$ equals $1$.

\begin{remark}
Note that the case $k=1$ with constant weights corresponds to projective designs. Spherical designs have been widely studied in the literature \cite{Bannai:1979kx,Delsarte:1977aa,Seidel:2001aa} and any antipodal spherical $2p$-design induces a projective $2p$-design by choosing the lines along the antipodal points.  
\end{remark}

\section{Signal reconstruction in the case of a cubature of strength $4$}\label{sec:exact}

Let $\mathscr{H}$ denote the collection of symmetric matrices in $\R^{d\times d}$. If $\{P_{V_j}\}_{j=1}^n$ spans $\mathscr{H}$, then standard results in frame theory imply that $S:\mathscr{H}\rightarrow\mathscr{H}$ given by $X\mapsto\sum_{j=1}^n\langle X,P_{V_j}\rangle P_{V_j}$ is invertible and 
\begin{equation*}
xx^* = \sum_{j=1}^n \|P_{V_j}(x)\|^2 S^{-1}(P_{V_j}), \quad \text{for all $x\in\R^d$.}
\end{equation*}
By imposing stronger conditions on $\{P_{V_j}\}_{j=1}^n$, the operator $S$ can be inverted explicitly. To that end, we establish the following result that generalizes the case $k=1$ treated in \cite{Balan:2009fk}. We point out that we allow for cubatures as opposed to projective designs in \cite{Balan:2009fk} that require the cubature weights to be constant:
\begin{proposition}\label{prop:2}
Let $\{(V_j,\omega_j)\}_{j=1}^n$ be a cubature of strength $4$ for $\mathcal{G}_{k,d}$. If $x\in S^{d-1}$, then 
\begin{equation}\label{eq:recon Px}
P_x=a_1\sum_{j=1}^n \omega_j \|P_{V_j}(x)\|^2 P_{V_j} - a_2I,
\end{equation}
where $a_1=\frac{d(d+2)(d-1)}{2k(d-k)}$ and $a_2=\frac{kd+k-2}{2(d-k)}$. 
\end{proposition}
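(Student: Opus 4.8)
The plan is to invert explicitly the (weighted) frame operator appearing on the right of \eqref{eq:recon Px} by using the strength-$4$ cubature property to replace the finite sum by an integral over $\Gkd$, and then to diagonalize that integral operator by Schur's lemma.

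First I would rewrite the right-hand side in terms of $P_x$. For $x\in S^{d-1}$ one has $\|P_{V_j}(x)\|^2=\langle x,P_{V_j}x\rangle=\langle P_x,P_{V_j}\rangle$, so
\begin{equation*}
\sum_{j=1}^n \omega_j \|P_{V_j}(x)\|^2 P_{V_j}=\sum_{j=1}^n \omega_j \langle P_x,P_{V_j}\rangle P_{V_j}=:\Phi(P_x).
\end{equation*}
The key observation is that each matrix entry of the $\mathscr H$-valued function $V\mapsto \langle P_x,P_V\rangle P_V$ is a product of two functions each lying in $\Pol_{\leq 2}(\Gkd)$: $\langle P_x,P_V\rangle$ is of this form, and so is every entry $(P_V)_{ab}=\langle P_{e_a},P_V\rangle$ of $P_V$ (off-diagonal entries being linear combinations of such diagonal-type terms). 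Hence each entry lies in $\Pol_{\leq 4}(\Gkd)$ by the span descriptions recorded before the proposition, and the cubature identity \eqref{def cubature formula} applies entrywise:
\begin{equation*}
\Phi(P_x)=\int_{\Gkd}\langle P_x,P_V\rangle P_V\, d\sigma_k(V).
\end{equation*}
Replacing $P_x$ by an arbitrary $X\in\mathscr H$ defines a linear map $\widetilde\Phi\colon\mathscr H\to\mathscr H$, $\widetilde\Phi(X)=\int \langle X,P_V\rangle P_V\, d\sigma_k$, which commutes with the action $X\mapsto gXg^t$ of $O(\R^d)$ because $\sigma_k$ is invariant and $P_{gV}=gP_Vg^t$.

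Next I would diagonalize $\widetilde\Phi$. Since $\mathscr H=\R I\oplus\mathscr H_0$ with $\mathscr H_0$ the traceless symmetric matrices, an irreducible $O(\R^d)$-module (isomorphic to $V_d^{(2)}$) non-isomorphic to the trivial one, Schur's lemma forces $\widetilde\Phi=\lambda_0\,\id$ on $\R I$ and $\widetilde\Phi=\lambda_1\,\id$ on $\mathscr H_0$. Writing $X=X_0+\frac{\trace X}{d}I$ and using $\trace P_x=1$ gives $\Phi(P_x)=\lambda_1 P_x+\frac{\lambda_0-\lambda_1}{d}I$, which solves for $P_x$ and yields \eqref{eq:recon Px} with $a_1=1/\lambda_1$ and $a_2=\frac{\lambda_0-\lambda_1}{\lambda_1 d}$.

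Finally I would compute the two eigenvalues. Applying $\widetilde\Phi$ to $I$ gives $\widetilde\Phi(I)=k\int P_V\,d\sigma_k=\frac{k^2}{d}I$, since $\int P_V\,d\sigma_k$ is $O(\R^d)$-equivariant with trace $k$; hence $\lambda_0=k^2/d$. For $\lambda_1$ I would take the trace of $\widetilde\Phi$ over $\mathscr H$: it equals $\lambda_0+\lambda_1\bigl(\tfrac{d(d+1)}{2}-1\bigr)$, while on the other hand $\sum_m\langle\widetilde\Phi(E_m),E_m\rangle=\int\|P_V\|^2\,d\sigma_k=\int\trace(P_V)\,d\sigma_k=k$ for any orthonormal basis $(E_m)$ of $\mathscr H$ (equivalently, one may pair $\Phi(P_x)$ with $P_x$ and invoke $\int\langle P_x,P_V\rangle^2 d\sigma_k=\frac{(k/2)_2}{(d/2)_2}$, which is \eqref{eq:new bound formula} for the Haar measure). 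Either route gives $\lambda_1=\frac{2k(d-k)}{d(d-1)(d+2)}$, and substituting into $a_1=1/\lambda_1$ and $a_2=\frac{\lambda_0-\lambda_1}{\lambda_1 d}$ produces the stated constants after elementary simplification. I expect the only genuinely delicate point to be the degree count that licenses the passage to the integral, i.e. checking that the entries of $\langle P_x,P_V\rangle P_V$ really lie in $\Pol_{\leq 4}(\Gkd)$ and not a larger space; once that is in place, the remainder is representation theory and arithmetic.
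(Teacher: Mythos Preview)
Your proof is correct, and the overall architecture matches the paper's: pass from the weighted sum to the Haar integral via the strength-$4$ cubature property, use $O(\R^d)$-invariance to force the form $\alpha P_x+\beta I$, then pin down the two constants. The tools differ in two places. First, to obtain the form $\alpha P_x+\beta I$, the paper pairs $\Phi(P_x)$ against $P_y$, notes the resulting function on $\mathcal G_{1,d}\times\mathcal G_{1,d}$ is zonal and of degree $\leq 2$ in each variable, and reads off $\alpha_1(x,y)^2+\alpha_2$; you instead apply Schur's lemma directly to the $O(\R^d)$-module decomposition $\mathscr H=\R I\oplus\mathscr H_0$. Second, for the constants, the paper uses the tight $1$- and tight $2$-fusion frame identities (taking traces and setting $x=y$), while your primary route evaluates $\widetilde\Phi(I)$ for $\lambda_0$ and the operator trace $\sum_m\langle\widetilde\Phi(E_m),E_m\rangle=\int_{\Gkd}\trace(P_V^2)\,d\sigma_k=k$ for $\lambda_1$. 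Your Schur argument is conceptually cleaner and transports immediately to other equivariant maps on $\mathscr H$; the paper's zonal-function route is marginally more elementary in that it does not name the irreducibility of $\mathscr H_0$. Your trace computation is a genuine alternative to the paper's tight $2$-fusion step and sidesteps the moment formula \eqref{eq:new bound formula} entirely.
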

\begin{proof}
For any $x,y\in S^{d-1}$, the function
$V\mapsto \langle P_x,P_V\rangle \langle P_y,P_V\rangle $ belongs to $\Pol_{\leq 4}(\Gkd)$. Applying the cubature formula yields 
\begin{equation}\label{e1}
\sum_{j=1}^n \omega_j \langle P_x,P_{V_j}\rangle \langle P_y,P_{V_j}\rangle = \int_{\Gkd} \langle P_x,P_{V}\rangle \langle P_y,P_{V}\rangle d\sigma_k(V).
\end{equation}
The function
\begin{equation}
G: (\R x,\R y)\mapsto  \int_{\Gkd} \langle P_x,P_{V}\rangle \langle P_y,P_{V}\rangle d\sigma_k(V)
\end{equation}
belongs to $L^2({\mathcal G}_{1,d}\times {\mathcal G}_{1,d})$ and is zonal. For each variable, it has the form $ \R x\mapsto \langle P_x,A(y)\rangle$, where $A(y)=\int_{\mathcal{G}_{k,d}} \langle P_y,P_V\rangle P_V d\sigma_k(V)$, and $ \R y\mapsto \langle P_y,A(x)\rangle$, respectively. Since $A(y)$ is self-adjoint and hence a linear combination of projections, $G(\cdot,\R y)$ and $G(\R x,\cdot)$ belong to $\Pol_{\leq 2}({\mathcal G}_{1,d})$. The zonal functions on the projective space are polynomials in the variable $\langle P_x, P_y\rangle=(x,y)^2$, so that $G$ must be of the form $\alpha_1 ( x,y)^2+\alpha_2$. Thus, \eqref{e1} yields 
\begin{equation}\label{e2}
\sum_{j=1}^n \omega_j \langle P_x,P_{V_j}\rangle \langle P_y,P_{V_j}\rangle = \alpha_1\langle P_x,P_y\rangle  + \alpha_2 \langle I, P_y\rangle .
\end{equation}
Since \eqref{e2} holds for every $y$, we derive 
\begin{equation}\label{e3}
\sum_{j=1}^n \omega_j \langle P_x,P_{V_j}\rangle P_{V_j}= \alpha_1 P_x + \alpha_2 I.
\end{equation}
Taking traces in \eqref{e3} leads to
$ 
k\sum_{j=1}^n \omega_j \langle P_x,P_{V_j}\rangle = \alpha_1  + d\alpha_2,
$ 
and the property of tight $1$-fusion frames gives
$ 
\sum_{j=1}^n \omega_j \langle P_x,P_{V_j}\rangle = A_1= k/d,
$ 
so we obtain 
\begin{equation}\label{eq:first fro alpha and beta}
\alpha_1+d\alpha_2= k^2/d.
\end{equation} 
Taking $x=y$ in \eqref{e2} implies
$ 
\sum_{j=1}^n \omega_j \langle P_x,P_{V_j}\rangle ^2= \alpha_1  + \alpha_2,
$ 
and the tight $2$-fusion frame property leads to 
$ 
\sum_{j=1}^n \omega_j \langle P_x,P_{V_j}\rangle ^2= A_2= k(k+2)/(d(d+2)),
$ 
so that we obtain 
\begin{equation}\label{eq:second alpha and beta}
\alpha_1+\alpha_2= k(k+2)/(d(d+2)).
\end{equation}
Solving for $\alpha_1$ and $\alpha_2$ in \eqref{eq:first fro alpha and beta} and \eqref{eq:second alpha and beta} yields the required identity with $a_1=1/\alpha_1$ and $a_2=\alpha_2/\alpha_1$.
\end{proof}

\begin{remark}\label{remark:13} Since any $X\in\mathscr{H}$ can be written as a sum of weighted orthogonal projectors, \eqref{eq:recon Px} can be extended to
\begin{equation}\label{eq:trace reconstruction perfect finite}
X=a_1\sum_{j=1}^n \omega_j \langle X,P_{V_j}\rangle P_{V_j} - a_2\trace(X) I.
\end{equation}
For $x\in\R^d$ and $X=xx^*$, the tight-$1$ fusion frame property yields $\trace(X)=\|x\|^2=\frac{d}{k}\sum_{j=1}^n \omega_j\|P_{V_j}(x)\|^2$, so that the entire right-hand side of \eqref{eq:trace reconstruction perfect finite} can be computed from $\{\|P_{V_j}(x)\|^2\}_{j=1}^n$ and hence $\pm x$ can be recovered. 

We can conclude from \eqref{eq:trace reconstruction perfect finite} that $\{\omega_jP_{V_j}\}_{j=1}^n$ and $\{Q_j\}_{j=1}^n$, where $Q_j=a_1P_{V_j}-a_2\frac{d}{k}I$, are pairs of dual frames for $\mathscr{H}$, i.e., 
\begin{equation*}
X = \sum_{j=1}^n  \langle X,\omega_jP_{V_j}\rangle Q_j,\quad\text{for all $X\in\mathscr{H}$.}
\end{equation*}
Moreover, if $V$ is a random subspace, uniformly distributed in $\mathcal{G}_{k,d}$, i.e., distributed according to $\sigma_k$, then the proof of Proposition \ref{prop:2} yields that 
\begin{equation}\label{eq:expec}
a_1\mathbb{E}(\langle X,P_{V}\rangle P_{V}) - a_2\trace(X) I = X, 
\end{equation}
for all $X\in\mathscr{H}$. Thus, if $\{V_j\}_{j=1}^n\subset\mathcal{G}_{k,d}$ are independent copies of $V$, then the law of large numbers implies 
\begin{equation}\label{eq:law ln}
\frac{a_1}{n}\sum_{j=1}^n  \langle X,P_{V_j}\rangle P_{V_j} - a_2\trace(X) I  \rightarrow X \quad \text{almost surely.}
\end{equation} 
However, $n$ must be chosen large to obtain an accurate representation of $X$. In Sections \ref{sec:prefinal} and \ref{sec:final}, we shall see that the random choice of subspaces can be efficient when the algebraic reconstruction formula is replaced with a semidefinite program. 
\end{remark}

\section{Algorithm for signal reconstruction from magnitudes of incomplete subspace components }\label{sec:3}

In this section, we consider the situation where $x\in S^{d-1}$, and we aim to reconstruct $\pm x$ from any $n-p$ elements of the set $\{\|P_{V_j}(x)\|^2\}_{j=1}^n$. Indeed, for fixed $p$, we are aiming at a reconstruction scheme valid for any subset of $p$ missing norms. Without loss of generality, we can assume that the first $p$ norms have been erased, so we want to recover $\pm x$ from the knowledge of 
$\{\|P_{V_j}(x)\|\}_{j=p+1}^n$.

In a first step, we attempt to compute the missing values 
\begin{equation*}
t_{j}:=\Vert P_{V_{j}}(x)\Vert^2,\quad 1\leq j\leq p. 
\end{equation*}
This will be made possible by the property that  $\{(V_j,\omega_j)\}_{j=1}^n$ is a tight $p$-fusion frame with $\sum_{j=1}^n\omega_j =1$.
The second step is dedicated to reconstructing $\pm x$ from $\{\|P_{V_j}(x)\|^2\}_{j=1}^n$.

\subsection{Step 1: reconstruction of the erased norms}\label{subsec:1}
The tight $p$-fusion frame $\{(V_j,\omega_j)\}_{j=1}^n$ is also a tight $\ell$-fusion frame for $1\leq \ell \leq p$, cf.~\cite[Proposition 5.1]{Bachoc:2010aa}, so that \eqref{eq:new bound formula} yields  
\begin{equation*}
\sum_{j=1}^n \omega_j\|P_{V_j}(x)\|^{2\ell}=A_{\ell}=\frac{(k/2)_{\ell}}{(d/2)_{\ell}},\quad   1\leq\ell\leq p.
\end{equation*}
Therefore,  $(t_1,\dots, t_p)$ is a solution of the algebraic system of equations
\begin{equation*}\label{eq:algebraic}
\sum_{j=1}^p \omega_j T_j^{\ell}=\frac{(k/2)_{\ell}}{(d/2)_{\ell}}-\sum_{j=p+1}^n \omega_j \Vert P_{V_j}(x)\Vert^{2\ell} ,\quad   1\leq\ell\leq p, \tag{AE}
\end{equation*}
in the unknowns $(T_1,\ldots,T_p)$. To start with, let us consider the special case of equal weights; then, \eqref{eq:algebraic} gives the values of the symmetric powers $\sum_{j=1}^p t^\ell_j$, for $\ell=0,\ldots,p$, which, as polynomial expressions, generate the ring of symmetric polynomials up to degree $p$. Vieta's formula yields
\begin{equation*}
\prod_{i=1}^p (T-t_i) = \sum_{j=0}^p (-1)^j e_j T^{p-j}, \quad\text{where $e_0=1$ and } e_j=\sum_{1\leq i_1<\ldots<i_j\leq p} t_{i_1}\cdots t_{i_p},
\end{equation*}
and Newton's identity leads to
\begin{equation*}
e_j=\frac{1}{j}\sum_{\ell=1}^j (-1)^{\ell-1}e_{j-\ell}\sum_{j=1}^p t^\ell_j,\quad\text{for $j=1,\ldots,p$.}
\end{equation*}
Therefore, we can compute the coefficients of $\prod_{i=1}^p (T-t_i)$ as a polynomial in $T$ and solve for its roots; we see that $(t_1,\dots, t_p)$ is determined up to a permutation so that we obtain at most $p!$ distinct solutions to \eqref{eq:algebraic}.

If the weights are not equal, one can still show that \eqref{eq:algebraic} has at most $p!$ solutions. The issue is to verify that the associated affine variety is zero-dimensional. Results from intersection theory and the refined B{\'e}zout theorem, cf.~\cite{Fulton:1984fk,Sottile:2001uq} and \cite{Bochnak:1998kx}, then imply that the variety's cardinality is at most the product of the degrees of the $p$ polynomials, i.e., there are at most $p!$ solutions:
\begin{proposition}\label{Prop:algebraic system}
Let $\{b_\ell\}_{\ell=1}^p$ be complex numbers and define
\begin{equation*}
f_\ell(T) =  \sum_{j=1}^p \omega_j T_j^\ell -b_\ell,\quad \ell=1,\ldots,p.
\end{equation*}
If $\{\omega_j\}_{j=1}^p$ are positive numbers, then the affine variety $\mathcal{V}:=\{T\in\C^p : f_1(T)=0,\ldots,f_p(T)=0\}$ is zero-dimensional. 
\end{proposition}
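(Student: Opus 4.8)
The plan is to argue by induction on $p$. The engine is the observation that the Jacobian of the polynomial map $F=(f_1,\dots,f_p)\colon\C^p\to\C^p$ is nonsingular away from the diagonal loci $\{T_i=T_j\}$, while on each such locus the system $f_1=\dots=f_p=0$ collapses to a system of the very same shape in one fewer variable.

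First I would compute $\partial f_\ell/\partial T_j=\ell\,\omega_j T_j^{\ell-1}$, so that $JF(T)=\diag(1,\dots,p)\cdot\big[T_j^{\ell-1}\big]_{\ell,j}\cdot\diag(\omega_1,\dots,\omega_p)$ and hence $\det JF(T)=p!\,\big(\prod_{j}\omega_j\big)\prod_{i<j}(T_j-T_i)$. Because the weights $\omega_j$ are positive, this determinant vanishes precisely at points with two equal coordinates. At any $T\in\mathcal V$ with pairwise distinct coordinates, $F$ is, by the holomorphic inverse function theorem, a biholomorphism near $T$, so $\mathcal V=F^{-1}(0)$ reduces to $\{T\}$ in a neighbourhood of $T$; thus such a $T$ is an isolated point of $\mathcal V$ and lies on no component of positive dimension.

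For the inductive step, assume the proposition for $p-1$ and suppose $\mathcal V\subset\C^p$ had an irreducible component $C$ of dimension at least $1$. By the previous step $C\subseteq\bigcup_{i<j}\{T_i=T_j\}$, and since $C$ is irreducible it is contained in a single $\{T_i=T_j\}$; after relabelling, say $\{T_1=T_2\}$. Setting $T_1=T_2=:u$ identifies $\mathcal V\cap\{T_1=T_2\}$ with the locus in $\C^{p-1}$, with coordinates $(u,T_3,\dots,T_p)$, defined by the $p$ equations $(\omega_1+\omega_2)u^\ell+\sum_{j=3}^{p}\omega_j T_j^\ell=b_\ell$, $\ell=1,\dots,p$. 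Discarding the last of these equations leaves a system of exactly the form in the proposition, now in $p-1$ variables with positive weights $\omega_1+\omega_2,\omega_3,\dots,\omega_p$; by the inductive hypothesis its zero set is zero-dimensional, so the smaller set $\mathcal V\cap\{T_1=T_2\}\supseteq C$ is zero-dimensional, contradicting $\dim C\ge 1$. The base case $p=1$ is immediate, as $\omega_1T_1=b_1$ has the unique solution $T_1=b_1/\omega_1$.

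The one step that deserves care is the implication $\det JF(T)\ne 0\Rightarrow T$ is isolated in $\mathcal V$: knowing merely that $T$ is a smooth point of some component is not enough, one needs local finiteness of $\mathcal V$ at $T$, which the holomorphic inverse function theorem provides at once (a purely algebraic alternative: $\ker JF(T)=\{0\}$ forces the Zariski tangent space, hence the local dimension, of $\mathcal V$ at $T$ to be zero). The Vandermonde determinant, the fact that an irreducible variety contained in a finite union of closed sets lies in one of them, and the bookkeeping for the reduced system are all routine. Once $\mathcal V$ is known to be zero-dimensional, the refined B\'ezout theorem bounds $\card\mathcal V\le\prod_{\ell=1}^p\deg f_\ell=p!$, which is the bound the reconstruction algorithm of Section~\ref{sec:3} relies on.
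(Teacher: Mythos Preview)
Your proof is correct and follows essentially the same approach as the paper: induction on $p$, the Vandermonde computation of $\det JF$, the conclusion that points off the diagonals are isolated, and the reduction to $p-1$ variables with new positive weight $\omega_1+\omega_2$ on each diagonal $\{T_i=T_j\}$. The only cosmetic difference is that the paper phrases the off-diagonal step via the local dimension formula for nonsingular points rather than the holomorphic inverse function theorem, and handles the diagonals directly rather than through a contradiction on a hypothetical positive-dimensional component.
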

\begin{proof}
We proceed by induction on $p$. The assertion is certainly true for $p=1$. Next, we observe that the Jacobian determinant satisfies
\begin{align*}
\det\big( \frac{\partial (f_1,\ldots,f_p)}{\partial(T_1,\ldots,T_p)} \big) & = \det \begin{pmatrix}  \omega_1 & \hdots & \omega_p \\
\vdots & & \vdots\\
\omega_1 p T_1^{p-1} & \hdots & \omega_1 p T_p^{p-1}
\end{pmatrix}.
\intertext{By applying the multilinearity of the determinant once to the columns and another time to the rows, we obtain}
\det\big( \frac{\partial (f_1,\ldots,f_p)}{\partial(T_1,\ldots,T_p)} \big) 
& = \omega_1\cdots\omega_p \cdot p!\cdot \det\begin{pmatrix}  1& \hdots & 1 \\
\vdots & & \vdots\\
 T_1^{p-1} & \hdots & T_p^{p-1}
\end{pmatrix}.
\intertext{The well-known formula for the Vandermonde determinant yields}
\det\big( \frac{\partial (f_1,\ldots,f_p)}{\partial(T_1,\ldots,T_p)} \big) 
& = \omega_1\cdots\omega_p \cdot p!\cdot \prod_{1\leq i<j\leq p} (T_j-T_i).
\end{align*}
For $i<j$, let $\Delta_{i,j}:=\{T\in\C^d : T_i=T_j\}$ denote the diagonals. The Jacobian determinant is apparently nonzero for $T\not\in \bigcup_{i<j}\Delta_{i,j}$. Therefore, every $T\in \mathcal{V}\setminus \bigcup_{i<j}\Delta_{i,j}$ is a nonsingular point of $\mathcal{V}$, and the dimension of $\mathcal{V}$ at $T$ is $p-p=0$, cf.~\cite[Theorem 9.9]{Cox:1996fk} and \cite[Lemma 11.5.1]{Bochnak:1998kx}. 
It remains to consider the intersection of $\mathcal{V}$ with $\Delta_{i,j}$. 
To fix ideas, let us consider the case $i = 1$, $j = 2$. The intersection $\mathcal{V} \cap\Delta_{1,2}$ is given by the system of equations
\begin{equation*}
(\omega_1+\omega_2)T_2^\ell+\sum_{j=3}^p \omega_jT_j^\ell = b_\ell ,\quad \ell=1,\ldots,p.
\end{equation*}
Because $\omega_1+\omega_2>0$, by induction the first $p-1$ of these equations have only finitely many solutions. Thus, $\mathcal{V}\cap\Delta_{1,2}$ is finite, too.
\end{proof}

\begin{remark}
The above proof shows that the positivity assumption on the weights in Proposition \ref{Prop:algebraic system} can be replaced with $\omega_{j_1}+\ldots+\omega_{j_i}\neq 0$, for all $1\leq j_1<\ldots<j_i\leq p$ and $i=1\ldots,p$.
\end{remark}

We have proved that the system of algebraic equations \eqref{eq:algebraic} has at most $p!$ complex solutions. In order to compute these solutions,
standard algorithmic methods can be applied \cite{Cohen:1999xx, Cox:1996fk}. The construction of a Gr\"obner basis of the ideal $\mathcal I$ generated by the $p$ equations allows
to compute the algebraic operations in the quotient  ring $\R[T_1,\dots,T_p]/{\mathcal I}$, which is finite dimensional and of dimension at most $p!$. The computation of the solutions 
then boils down to linear algebra in this space.

%
%
%

\subsection{Step 2: reconstruction from the magnitude of subspace components}\label{section:step 2}
In this second step, we try to compute $P_x$ from each of the possible candidates for $\{\| P_{V_j}(x)\|^2\}_{j=1}^n$ derived 
from  a solution $(t_1,\dots,t_p)$ of \eqref{eq:algebraic}. For this, we assume that $\{(V_j,\omega_j)\}_{j=1}^n$ is also a cubature of strength $4$,
and we  apply formula \eqref{eq:recon Px} where we replace $\| P_{V_j}(x)\|^2$ by $t_j$ for $1\leq i\leq p$.

To summarize, we have proved:
\begin{theorem}\label{th:all}
Let $\{(V_j,\omega_j)\}_{j=1}^n$ be a tight $p$-fusion frame that is also a cubature of strength $4$ for $\mathcal{G}_{k,d}$. 
If $x\in S^{d-1}$, then Algorithm 1 outputs a list $L$ of at most $2p!$ elements of $S^{d-1}$ containing $x$.
\begin{algorithm}
\caption{List reconstruction}
\begin{algorithmic}[1]
\REQUIRE $\{t_j:=\Vert P_{V_j}(x)\Vert^2\}_{j=p+1}^n$.
\ENSURE $L$, $x\in L$.
\STATE Initialize $L=\emptyset$.
\STATE Compute the set $\mathcal S$ of solutions of the algebraic system of equations in the unknowns $T_1,\dots, T_p$:
\begin{equation*}
\sum_{j=1}^p \omega_j T_j^{\ell}=\frac{(k/2)_{\ell}}{(d/2)_{\ell}}-\sum_{j=p+1}^n 
\omega_j t_j^{\ell} ,\quad   1\leq\ell\leq p.\tag{AE}
\end{equation*}
\STATE For every $(t_1,\dots, t_p)\in \mathcal S$, and $\alpha,\beta$ defined in Proposition \ref{prop:2}, compute
\begin{equation*}
P=a_1\sum_{j=1}^n \omega_j t_j P_{V_j} - a_2 I.
\end{equation*} 
\STATE If $P$ is a projection of rank $1$, compute a unit vector $\xi$ spanning its image
and add $\pm \xi$ to $L$.
\RETURN $L$
\end{algorithmic}
\end{algorithm}
\end{theorem}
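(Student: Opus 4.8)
The plan is to prove correctness of Algorithm 1 by analysing its two halves separately, reusing the discussion of Sections \ref{subsec:1} and \ref{section:step 2}. For the first half, I would note that since $\{(V_j,\omega_j)\}_{j=1}^n$ is a tight $p$-fusion frame with $\sum_{j=1}^n\omega_j=1$, it is a tight $\ell$-fusion frame for every $1\le\ell\le p$, so \eqref{eq:new bound formula} gives $\sum_{j=1}^n\omega_j\|P_{V_j}(x)\|^{2\ell}=(k/2)_\ell/(d/2)_\ell$. Rearranging, the true tuple $(t_1^\star,\dots,t_p^\star):=(\|P_{V_1}(x)\|^2,\dots,\|P_{V_p}(x)\|^2)$ solves the system (AE) and hence lies on the affine variety $\mathcal V\subset\C^p$ cut out by its $p$ polynomials. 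Since $\omega_1,\dots,\omega_p>0$, Proposition \ref{Prop:algebraic system} shows $\mathcal V$ is zero-dimensional; as the $\ell$-th polynomial of (AE) has degree $\ell$, the refined B\'ezout bound gives $\#\mathcal V\le 1\cdot 2\cdots p=p!$. So the set $\mathcal S$ computed in line 2 has at most $p!$ elements and contains $(t_1^\star,\dots,t_p^\star)$.

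For the second half, I would observe that for the candidate $(t_1^\star,\dots,t_p^\star)$ the matrix $P=a_1\sum_{j=1}^n\omega_j t_j P_{V_j}-a_2I$ formed in line 3 uses exactly $t_j=\|P_{V_j}(x)\|^2$ for every $j$ (the last $n-p$ values are the inputs, the first $p$ the correct candidate), so Proposition \ref{prop:2} — whose proof needs only the cubature-of-strength-$4$ hypothesis — yields $P=P_x$, a rank-one orthogonal projector with range $\R x$. Hence line 4 appends $\pm x$ to $L$, and in particular $x\in L$. For each element of $\mathcal S$ — at most $p!$ of them — line 4 appends at most two unit vectors (nothing if $P$ is not a rank-one projection, the pair $\pm\xi$ spanning its image otherwise), so $|L|\le 2p!$, which is the assertion.

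The only genuinely non-routine ingredient is the cardinality bound $\#\mathcal V\le p!$, i.e.\ Proposition \ref{Prop:algebraic system} combined with refined B\'ezout; this is where positivity of the weights is essential, through the Vandermonde form of the Jacobian and the induction over the diagonal strata $\Delta_{i,j}$. One small point worth stating explicitly: (AE) is solved over $\C$, so $\mathcal S$ may contain spurious complex or otherwise non-physical tuples, but these are harmless because line 4 discards any candidate whose associated $P$ fails to be a rank-one real projection, while the correct tuple always survives this test.
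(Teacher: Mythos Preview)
Your proposal is correct and follows essentially the same approach as the paper: the theorem is stated there as a summary of Sections \ref{subsec:1} and \ref{section:step 2}, and you have accurately reproduced that two-step argument (tight $p$-fusion frame property plus Proposition \ref{Prop:algebraic system} and refined B\'ezout for Step 1, then Proposition \ref{prop:2} for Step 2). Your additional remark about spurious complex solutions being harmlessly discarded in line 4 is also consistent with the paper's post-theorem discussion.
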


Note that the cardinality of the list $L$ is triggered by the number of erasures $p$. The number of measurements depends on $p$ and on the ambient signal dimension $d$. 
The weighted subspaces are supposed to form a cubature of strength $4$ for $\mathcal{G}_{k,d}$, therefore, we must have at least $n\geq \frac{1}{2}d(d+1)$ many subspaces, see \cite{Harpe:2005fk}. Hence, the cardinality scales at least quadratically in the ambient dimension $d$ already for $p=2$. In general, the minimal number of measurements is a growing function of $p$ and $d$. We refer to \cite{Bachoc:2010aa} for some explicit constructions of tight $p$-fusion frames. 

We also note that the actual output list $L$ can be much shorter than $2p!$ because many solutions of the algebraic system of equations will not lead to a candidate for the signal $x$.
In the first place, we can exclude those solutions of \eqref{eq:algebraic} that are not real or have negative entries. Moreover, one can expect that, for most solutions of \eqref{eq:algebraic}, the symmetric operator $P$ in step 3 is not a rank-one projector. Also, the solutions $(t_1,\dots,t_p)$ of \eqref{eq:algebraic} that do not satisfy
$|t_i^{1/2}-t_j^{1/2}|^2\leq \|P_{V_i}-P_{V_j}\|^2$, for every $1\leq i<j\leq n$, 
can be removed  because they  violate the consistency conditions
\begin{equation*}
|\|P_{V_i}(x)\|-\|P_{V_j}(x)\||^2 \leq  \|P_{V_i}(x)-P_{V_j}(x)\|^2 \leq \|P_{V_i}-P_{V_j}\|_\infty^2,
\end{equation*}
where $\|P_{V_i}-P_{V_j}\|_\infty$ denotes the operator norm of $P_{V_i}-P_{V_j}$. 


\begin{remark}\label{remark:1.1}
For $p=2$, the assumptions in Theorem \ref{th:all} reduce to $\{(V_j,\omega_j)\}_{j=1}^n$ being a cubature of strength $4$ for $\mathcal{G}_{k,d}$.  Even for $k=1$, our result extends \cite{Balan:2009fk} since we only need $n-2$ elements of the collection $\{\|P_{V_j}(x)\|^2\}_{j=1}^n$ as opposed to all $n$ elements in \cite{Balan:2009fk}. This additional flexibility is not for free: We must assume that $x\in S^{d-1}$, and, instead of the two possibilities $\pm x$ in \cite{Balan:2009fk}, we obtain a list $L$ of $4$ elements, one of which is $x$.  
%


%
\end{remark}

%

\section{Replacing the algebraic reconstruction formula with semidefinite programming}\label{sec:prefinal}
We assume in Proposition \ref{prop:2} that the weighted subspaces form a cubature of strength $4$ for $\mathcal{G}_{k,d}$. However, any real cubature of strength $4$ requires at least $\frac{1}{2}d(d+1)$ subspaces, see \cite{Harpe:2005fk}. Hence, the cardinality scales at least quadratically in the ambient dimension $d$. In this section, we replace the algebraic reconstruction formula with a feasibility problem of a semidefinite program similar to the approach in \cite{Candes:uq,Demanet:2012uq}, where the case $k=1$ was discussed. 

Recall that $\mathscr{H}$ denotes the collection of symmetric matrices in $\R^{d\times d}$. For $\{V_j\}_{j=1}^n\subset\mathcal{G}_{k,d}$, we define the operator 
\begin{equation}\label{eq:F}
\mathcal{F}_n : \mathscr{H} \rightarrow \R^n,\quad X\mapsto \frac{d}{k}(\langle X,P_{V_j}\rangle )_{j=1}^n. 
\end{equation} 
For $x\in \R^d$, let $f:=\frac{d}{k}(\|P_{V_j}(x)\|^2)_{j=1}^n=\mathcal{F}_n(xx^*)\in\R^n$, and we now aim to reconstruct $\pm x$ from $f$. By assuming that the union of the subspaces $\{V_j\}_{j=1}^n$ spans $\R^d$, clearly, $xx^*$ is a solution of 
\begin{equation}\label{eq:rank optimization}
\min_{X\in\mathscr{H}}( \rank(X)),\quad\text{subject to} \quad \mathcal{F}_n(X) = f,\; X\succeq 0.
\end{equation}
The notation $X\succeq 0$ stands for $X$ being positive semidefinite. Rank minimization is in general NP-hard, and in convex optimization it is standard to replace \eqref{eq:rank optimization} with 
\begin{equation}\label{eq:convex}
\min_{X\in\mathscr{H}}( \trace(X)),\quad\text{subject to} \quad \mathcal{F}_n(X) = f,\; X\succeq 0,
\end{equation}
a semidefinite program, for which efficient algorithms based on
interior point methods are available. The NEOS Server
\cite{Czyzyk:1998fk} provides online solvers for semidefinite programs. We know that the solution of \eqref{eq:rank optimization} has rank $1$, so there is more structure to it and, as in \cite{Demanet:2012uq}, we can consider the underlying feasibility problem, i.e., 
\begin{equation}\label{eq:convex II}
\text{find } X\in\mathscr{H}, \quad\text{subject to}\quad\mathcal{F}_n(X) = f,\; X\succeq 0.
\end{equation}
For $k=1$, there is a constant $c>0$, such that the random choice of at least $c d$ subspaces yields that, with high probability, $xx^*$ is the only solution to \eqref{eq:convex II}, i.e., the only feasible point of \eqref{eq:rank optimization} and \eqref{eq:convex}, cf.~\cite{Candes:2012fk,Candes:uq,Demanet:2012uq}. Here, we extend the result to $k>1$:
\begin{theorem}\label{th:finale!}
There are constants $c_1,c_2>0$ such that, if $n\geq c_1d$ and $\{V_j\}_{j=1}^n\subset\mathcal{G}_{k,d}$ are chosen independently identically distributed according to $\sigma_k$,  then, for all $x\in\R^d$, the matrix $xx^*$ is the unique solution to \eqref{eq:convex II} with probability at least $1-e^{-c_2 n}$. 
\end{theorem}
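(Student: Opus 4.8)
The plan is to follow the dual‑certificate strategy that established exact recovery for $k=1$ in \cite{Candes:uq,Demanet:2012uq,Candes:2012fk}, adapting every probabilistic ingredient to the random rank‑$k$ projectors $P_{V_j}$; the constants $c_1,c_2$ will be allowed to depend on $k$.

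\emph{Reductions and the deterministic criterion.} Since \eqref{eq:convex II} is scale‑equivariant we may assume $\|x\|=1$, and since $\sigma_k$ is $O(\R^d)$‑invariant the joint law of $\{V_j\}_{j=1}^n$, hence the probability that $xx^*$ is the unique feasible point, is unchanged when $x$ is rotated. So it suffices to treat one fixed $x=e_1$, and this also disposes of the quantifier ``for all $x$'' (it is the same event, up to a rotation). Split $\mathscr H=T\oplus T^\perp$ with $T=\{xv^*+vx^*:v\in\R^d\}$ and $T^\perp=\{M\in\mathscr H:Mx=0\}$, with orthogonal projections $\mathcal P_T,\mathcal P_{T^\perp}$, and note that any competitor $X=xx^*+H$ feasible for \eqref{eq:convex II} satisfies $\mathcal F_n(H)=0$ and $\mathcal P_{T^\perp}(H)=P_{x^\perp}XP_{x^\perp}\succeq 0$. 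I would then invoke the deterministic sufficient condition for optimizationless recovery from \cite{Demanet:2012uq}: $xx^*$ is the unique feasible point of \eqref{eq:convex II} as soon as (i) $\mathcal F_n$ is bounded below on $T$, and (ii) there exists a dual certificate $Y$ in the range of $\mathcal F_n^*$, i.e.\ $Y=\sum_j\lambda_jP_{V_j}$, with $\mathcal P_T(Y)=xx^*$ and $\mathcal P_{T^\perp}(Y)$ on the appropriate side of $0$ (strictly negative definite on $T^\perp$). Everything then reduces to establishing (i) and (ii), each with probability $1-e^{-c_2n}$.

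\emph{The isometry on $T$.} Up to the scalar $(d/k)^2$, $\mathcal F_n^*\mathcal F_n$ is an average of $n$ i.i.d.\ operators $X\mapsto\langle X,P_{V_j}\rangle P_{V_j}$, whose common expectation — computed in Remark \ref{remark:13} and formula \eqref{eq:expec} — is $X\mapsto a_1^{-1}(X+a_2\trace(X)I)$; a short computation shows that its restriction to $T$ is block‑scalar (on $\R xx^*$ and on $\{xv^*+vx^*:v\perp x\}$) and invertible with condition number bounded in terms of $a_2$, hence of $k$. I would then show $\mathcal P_T\mathcal F_n^*\mathcal F_n\mathcal P_T$ concentrates around this expectation by a matrix Bernstein inequality on the $O(d)$‑dimensional operator space, using $0\le\langle P_x,P_V\rangle\le 1$ for the uniform bound on each summand and the tight‑$\ell$‑fusion‑frame moment identities \eqref{eq:new bound formula} (and Proposition \ref{lemma:ck}) for the variance proxy; the ambient‑dimension factor $d$ produced by Bernstein is harmless because $n\ge c_1d\gg\log d$. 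This gives (i) with probability $1-e^{-c_2n}$.

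\emph{The certificate, and the main obstacle.} I would build $Y$ by the golfing scheme of \cite{Candes:uq} as refined in \cite{Candes:2012fk}: partition the $n$ subspaces into $L$ disjoint batches, take an initial approximate certificate from the first batch, and at each step use a fresh batch to correct the residual $\mathcal P_T(Y)-xx^*$. Using the explicit per‑batch expectation restricted to $T$, each step contracts this residual in Frobenius norm by a constant factor, while the contribution it adds to $\mathcal P_{T^\perp}(Y)$ has small operator norm; after $L=O(\log d)$ steps the residual is negligible, a final exact correction is performed via the bound‑below from (i), and $\mathcal P_{T^\perp}(Y)$ is shown to be $\prec 0$. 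The bulk of the work, and the main obstacle, is the per‑step probabilistic estimate: one must bound, with failure probability $e^{-c_2\cdot(\text{batch size})}$, the operator norm of the centered sums $\sum_{j\in\text{batch}}\bigl(\langle H,P_{V_j}\rangle P_{V_j}-\mathbb E[\langle H,P_V\rangle P_V]\bigr)$ viewed as maps between $T$ and $T^\perp$. For $k=1$ these are sums of rank‑one terms built from i.i.d.\ Gaussian vectors and the required moment and tail bounds are classical; for $k>1$ the rows of the measurement matrix are only orthonormal, so $P_{V_j}$ is a structured random projector whose entries are stochastically dependent, and the classical computations must be replaced. Supplying the sharp moment estimates for $\langle X,P_V\rangle$ and the relevant products over $T$ and $T^\perp$ — so that matrix Bernstein still delivers the exponential rate and the logarithmic factor can be removed to reach $n\ge c_1d$ exactly as in \cite{Candes:2012fk} — is precisely the role of Proposition \ref{lemma:ck}.
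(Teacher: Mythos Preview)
Your reduction to a single $x$ is a genuine gap. Rotation invariance of $\sigma_k$ only gives $\mathbb P(E_x)=\mathbb P(E_{e_1})$ for each fixed $x$, where $E_x$ is the event that $xx^*$ is the unique feasible point; it does \emph{not} make $E_x$ and $E_{e_1}$ the same event on the underlying probability space, and the uniform event $\bigcap_{x\in S^{d-1}}E_x$ can have strictly smaller probability than $\mathbb P(E_{e_1})$. The paper handles this explicitly: conditions (a) and (b) of Theorem~\ref{th:candes} are already uniform in $x$ (they quantify over all positive semidefinite matrices, respectively all $X\in T$, and Theorem~\ref{th:ck} in fact treats all symmetric rank-$2$ matrices at once), whereas the certificate in (c) is built for each $x$ separately. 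One then takes an $\epsilon$-net on $S^{d-1}$ of cardinality at most $(1+2/\epsilon)^d$, absorbed by $e^{-c_2n}$ since $n\ge c_1d$, and argues as in \cite{Candes:2012fk} that the certificate constructed for the nearest net point also serves $x$.

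Your certificate construction also diverges from the paper and is internally inconsistent. You describe Gross-type golfing over $L=O(\log d)$ batches, which in the PhaseLift literature yields $n\gtrsim d\log d$; you then assert the logarithmic factor is removed ``exactly as in \cite{Candes:2012fk}'', but the contribution of \cite{Candes:2012fk} was precisely to \emph{abandon} golfing in favor of a single-shot inexact certificate, and that is what the paper does here. The certificate is defined directly in \eqref{dual certificate definition} as $Y=\frac{d}{nk}\sum_j\lambda_jP_{V_j}$ with $\lambda_j=\alpha-d\|P_{V_j}(x)\|^2 1_{E_j}$ and a truncation event $E_j$; Theorems~\ref{th:last one BB} and~\ref{th:second condition new} then establish $\|Y_T\|_1\le\gamma$ and $Y_{T^\perp}\succeq I_{T^\perp}$ directly, with no iterative correction. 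Relatedly, you mischaracterize Proposition~\ref{lemma:ck}: it is not a moment/variance estimate feeding a Bernstein bound for the certificate, but the dimension-free lower bound $\frac{d}{k}\,\mathbb E\bigl|\|P_V(z_1)\|^2-t\|P_V(z_2)\|^2\bigr|\ge u_k$ that drives the $\ell_1$-injectivity condition (b) via Theorem~\ref{th:ck}.
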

Note that the probability of exact recovery in Theorem \ref{th:finale!} holds simultaneously over all input signals $x\in\R^d$, and the constants  are independent of the ambient dimension $d$ but may depend on the subspace dimension $k$.

To verify Theorem \ref{th:finale!}, we shall first derive deterministic conditions serving uniqueness in \eqref{eq:convex II}. Later, we shall verify that these conditions are satisfied with high probability when the subspaces are chosen in the appropriate random fashion. After having assembled all ingredients, the proof of Theorem \ref{th:finale!} is presented in Appendix \ref{app:4}.

A simple rescaling allows us to restrict the considerations to $x\in S^{d-1}$. Let $T:=T_x:=\{xy^*+yx^*: y\in\R^d\}\subset \mathscr{H}$, and, for $Z\in\R^{d\times d}$, denote $Z_T$ its orthogonal projection onto $T$ and $Z_{T^\perp}$ its orthogonal projection onto the orthogonal complement of $T$. The term $\|\cdot\|_1$ denotes the nuclear norm and $\|\cdot\|_\infty$ the operator norm:
\begin{theorem}\label{th:candes}
Let $\{V_j\}_{j=1}^n\subset \mathcal{G}_{k,d}$ and $f=(\|P_{V_j}(x)\|^2)_{j=1}^n$. Assume that $0<A,B$ and $\gamma < A/B$ are fixed numbers, such that the following three points are satisfied:
\begin{itemize}
\item[\textnormal{(a)}] For all positive semidefinite matrices $X\in\mathscr{H}$, 
\begin{equation}\label{eq:1}
\frac{1}{n}\|\mathcal{F}_n(X)\|_{\ell_1} \leq B\|X\|_1.
\end{equation}
\item[\textnormal{(b)}] For all $X\in T$,
\begin{equation}\label{eq:2}
A\|X\|_\infty\leq \frac{1}{n}\|\mathcal{F}_n(X)\|_{\ell_1}.
\end{equation}
\item[\textnormal{(c)}] There exists $Y$ in the range of $\mathcal{F}_n^*$ such that
\begin{equation}\label{eq:3}
\|Y_T\|_1\leq \gamma  ,\qquad Y_{T^\perp}\succeq I_{T^\perp}.
\end{equation}
\end{itemize}
Then $xx^*$ is the unique solution to \eqref{eq:convex II}.
\end{theorem}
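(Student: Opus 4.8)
The plan is to run the standard dual-certificate argument for semidefinite feasibility, adapted to the fusion-frame operator $\mathcal{F}_n$. After the rescaling reduction we may take $x\in S^{d-1}$. Suppose $X\in\mathscr{H}$ with $X\succeq 0$ satisfies $\mathcal{F}_n(X)=f=\mathcal{F}_n(xx^*)$, and set $H:=X-xx^*$, so $\mathcal{F}_n(H)=0$; the goal is to force $H=0$. The first step is to describe $T^\perp$ concretely: a symmetric matrix $M$ is orthogonal to $T=T_x=\{xy^*+yx^*:y\in\R^d\}$ precisely when $x^*My=0$ for all $y$, i.e. when $Mx=0$, so $T^\perp=\{M\in\mathscr{H}:Mx=0\}$. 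Compressing $X\succeq 0$ by $P_{x^\perp}:=I-xx^*$ annihilates $xx^*$ and $H_T$ (both factor through $\R x$) and leaves $P_{x^\perp}HP_{x^\perp}=H_{T^\perp}$; hence $H_{T^\perp}\succeq 0$. This positivity is exactly what will allow $H_{T^\perp}$ to be plugged into hypothesis (a).

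Next I would exploit the certificate. Write $Y=\mathcal{F}_n^*(w)$ for some $w\in\R^n$; then $\langle Y,H\rangle=\langle w,\mathcal{F}_n(H)\rangle=0$, so
\[
0=\langle Y_T,H_T\rangle+\langle Y_{T^\perp},H_{T^\perp}\rangle .
\]
For the first term, the Schatten-Hölder inequality $|\langle Y_T,H_T\rangle|\le\|Y_T\|_1\|H_T\|_\infty$ combined with $\|Y_T\|_1\le\gamma$ gives $\langle Y_T,H_T\rangle\ge-\gamma\|H_T\|_\infty$; it is essential here to pair the nuclear norm of $Y_T$ with the operator norm of $H_T$ (and not vice versa), since the opposite choice loses the constant that makes the threshold $\gamma<A_k/B$ work. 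For the second term, $Y_{T^\perp}-I_{T^\perp}\succeq 0$ and $H_{T^\perp}\succeq 0$ are both supported on the $x^\perp$-block, so their trace inner product is nonnegative, whence $\langle Y_{T^\perp},H_{T^\perp}\rangle\ge\langle I_{T^\perp},H_{T^\perp}\rangle=\trace(H_{T^\perp})=\|H_{T^\perp}\|_1$. Combining the two bounds yields $\|H_{T^\perp}\|_1\le\gamma\|H_T\|_\infty$.

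Finally I would close the loop with (a) and (b). Since $\mathcal{F}_n(H)=0$ we have $\mathcal{F}_n(H_T)=-\mathcal{F}_n(H_{T^\perp})$, so applying (b) to $H_T\in T$ and then (a) to the positive semidefinite $H_{T^\perp}$ gives
\[
A_k\|H_T\|_\infty\le\tfrac1n\|\mathcal{F}_n(H_T)\|_{\ell_1}=\tfrac1n\|\mathcal{F}_n(H_{T^\perp})\|_{\ell_1}\le B\|H_{T^\perp}\|_1\le B\gamma\|H_T\|_\infty .
\]
As $\gamma<A_k/B$, this forces $\|H_T\|_\infty=0$, i.e. $H_T=0$; then the inequality $\|H_{T^\perp}\|_1\le\gamma\|H_T\|_\infty$ collapses to $\|H_{T^\perp}\|_1=0$, so $H_{T^\perp}=0$ as well, and therefore $H=0$ and $X=xx^*$.

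Given (a)–(c), the argument has no real obstacle; the only places demanding care are (i) the identification of $T^\perp$ and the verification that $H_{T^\perp}\succeq 0$, which is what licenses the use of (a) on $H_{T^\perp}$ rather than on $H$ itself, and (ii) selecting the correct side of the Schatten-Hölder inequality when bounding $\langle Y_T,H_T\rangle$. The genuine difficulty of the paper lies not here but in the subsequent verification that the frame-type inequalities (a) and (b) hold with good constants and that the dual certificate $Y$ in (c) exists with high probability when $\{V_j\}_{j=1}^n$ is drawn i.i.d.\ from $\sigma_k$ with $n\gtrsim d$; that is where the extension from $k=1$ to $k>1$ (and in particular Proposition \ref{lemma:ck}) is needed.
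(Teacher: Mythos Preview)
Your argument is correct and is precisely the standard dual-certificate proof that the paper defers to \cite{Candes:2012fk,Demanet:2012uq}: the paper omits the explicit proof, and what you have written is the canonical one, including the two delicate points you flag (that $H_{T^\perp}=P_{x^\perp}XP_{x^\perp}\succeq 0$, which legitimizes applying (a) to $H_{T^\perp}$, and the correct Schatten--H\"older pairing).
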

The matrix $Y$ in \eqref{eq:3} was called a dual certificate in \cite{Candes:uq}. 
To verify Theorem \ref{th:candes}, we can straightforwardly follow the lines of the proof in \cite{Candes:2012fk,Demanet:2012uq} while keeping track of the constants, see also \cite{Candes:uq,Chandrasekaran:2012fk}. The complete proof is in Appendix \ref{app:1}.

\begin{remark}
If $\{V_j\}_{j=1}^n$ is a design of strength $4$, then the conditions in Theorem \ref{th:candes} can be satisfied. Indeed, we can choose $B=1$ and there is $A_k>0$ satisfying \eqref{eq:2} that is even allowed to depend on $d$ in this case. Since $\mathcal{F}_n^*$ is onto, the certificate $Y=2I-2P_x$ is admissible and $\gamma$ can be zero. 
\end{remark}

In the subsequent sections, we shall verify that the conditions of Theorem \ref{th:candes} are satisfied with high probability when the subspaces $\{V_j\}_{j=1}^n$ are selected at random.

\subsection{Nuclear norm estimates on $\|\mathcal{F}_n(X)\|_{\ell_1}$ for $X\succeq 0$}\label{sec:5.1}
We shall verify that $\mathcal{F}_n$ is close to an isometry with high probability:
\begin{theorem}\label{th:singular}
Let $\{V_j\}_{j=1}^n\subset\mathcal{G}_{k,d}$ be independently chosen random subspaces with identical distribution $\sigma_k$. 
For $0<r<1$ fixed, there are constants $c(r),C(r)>0$, such that, for all positive semidefinite matrices $X$ and $n\geq c(r) d$,
\begin{equation}\label{eq:l1 norms}
(1-r)\|X\|_1 \leq \frac{1}{n}\|\mathcal{F}_n(X)\|_{\ell_1} \leq (1+r)\|X\|_1
\end{equation}
holds with probability at least $1-e^{-C(r) n}$.
\end{theorem}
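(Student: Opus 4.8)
The plan is to use the positivity of $X$ to collapse the two-sided estimate \eqref{eq:l1 norms} to a single operator-norm bound, and then to prove that bound by a covering-net argument driven by a scalar sub-exponential concentration inequality; the only genuinely new ingredient is the tail estimate for $\|P_Vv\|^2$ in the regime $k>1$. \emph{Reduction.} First I would observe that for $X\succeq0$ one has $\langle X,P_{V_j}\rangle=\trace(X^{1/2}P_{V_j}X^{1/2})\geq0$, so
\[
\tfrac1n\|\mathcal F_n(X)\|_{\ell_1}=\tfrac{d}{kn}\sum_{j=1}^n\langle X,P_{V_j}\rangle=\langle X,M_n\rangle,\qquad M_n:=\tfrac{d}{kn}\sum_{j=1}^nP_{V_j}\ \succeq\ 0 .
\]
Both $X\mapsto\langle X,M_n\rangle$ and $X\mapsto\|X\|_1=\trace(X)$ are linear on the cone of positive semidefinite matrices, so writing $X=\sum_i\lambda_iv_iv_i^*$ with $\lambda_i\geq0$ reduces \eqref{eq:l1 norms} (for all $X\succeq0$) to the statement $(1-r)\|v\|^2\leq v^*M_nv\leq(1+r)\|v\|^2$ for all $v\in\R^d$, i.e.\ to $\|M_n-I\|_\infty\leq r$. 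Hence it suffices to show that $\|M_n-I\|_\infty\leq r$ holds with probability at least $1-e^{-C(r)n}$ once $n\geq c(r)d$.

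Next I would establish the scalar concentration inequality. Fix $v\in S^{d-1}$ and set $Z_j:=\tfrac dk\|P_{V_j}v\|^2$. The $Z_j$ are i.i.d., and the tight $1$-fusion-frame identity \eqref{eq:new bound formula} (equivalently $\mathbb E[P_{V_1}]=\tfrac kd I$, by $O(\R^d)$-equivariance) gives $\mathbb E[Z_j]=1$; also $Z_j\geq0$. The crucial fact is that $Z_j-1$ is sub-exponential with $\psi_1$-norm at most a constant $\kappa=\kappa(k)$ that does not depend on $d$. To see this one uses that $\|P_{V_1}v\|^2$ has the law of $\|g_{1:k}\|^2/\|g_{1:d}\|^2$ for $g\sim N(0,I_d)$ (equivalently a $\Beta(k/2,(d-k)/2)$ distribution), and combines the classical $\chi^2$ upper tail for $\|g_{1:k}\|^2$ with the deviation bound $\|g_{1:d}\|^2\geq d/2$, which fails only with probability $e^{-cd}$; this is precisely the type of estimate provided by Proposition \ref{lemma:ck}, and for $k=1$ it reduces to the $\chi^2_1$ bound underlying \cite{Candes:uq,Candes:2012fk}. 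Bernstein's inequality for i.i.d.\ sub-exponential summands then gives, because $r<1$ puts us in the sub-Gaussian regime,
\[
\mathbb P\!\left(\bigl|v^*M_nv-1\bigr|>\tfrac r2\right)=\mathbb P\!\left(\bigl|\tfrac1n\textstyle\sum_{j=1}^nZ_j-1\bigr|>\tfrac r2\right)\ \leq\ 2\,e^{-c_0nr^2},\qquad c_0=c_0(k)>0 .
\]

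I would then pass from a fixed direction to all of $S^{d-1}$ by a net argument. Take a $\tfrac14$-net $\mathcal N$ of $S^{d-1}$ with $|\mathcal N|\leq12^d$. A union bound shows that $\max_{w\in\mathcal N}|w^*M_nw-1|\leq r/2$ fails with probability at most $2\cdot12^de^{-c_0nr^2}$, and on the complementary event the standard comparison $\|M_n-I\|_\infty\leq(1-2\cdot\tfrac14)^{-1}\max_{w\in\mathcal N}|w^*(M_n-I)w|=2\max_{w\in\mathcal N}|w^*(M_n-I)w|\leq r$ gives $\|M_n-I\|_\infty\leq r$, hence \eqref{eq:l1 norms} for all $X\succeq0$. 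Finally, setting $c(r):=4(\log 12)/(c_0r^2)$ and $C(r):=c_0r^2/4$, for $n\geq c(r)d$ one has $\log(2\cdot12^d)-c_0nr^2\leq-C(r)n$, so the failure probability is at most $e^{-C(r)n}$, which proves the theorem (with $c(r),C(r)$ depending on $r$ and $k$ but not on $d$).

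The step I expect to be the real obstacle is the uniform-in-$d$ sub-exponential tail for $\tfrac dk\|P_Vv\|^2$. For $k>1$ an orthonormal basis of $V$ has stochastically dependent entries, so $\|P_Vv\|^2$ is not a sum of independent squared Gaussians as when $k=1$, and a separate argument — the purpose of Proposition \ref{lemma:ck} — is needed to control how $\|P_Vv\|^2$ concentrates around $k/d$. Once that tail bound is available, the linear sample size $n\gtrsim d$ and the exponential failure probability $e^{-C(r)n}$ drop out of the covering-net computation above; the remaining ingredients (the reduction to $M_n$, Bernstein's inequality, and the net comparison lemma) are entirely standard.
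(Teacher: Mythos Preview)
Your argument is correct and follows essentially the same route as the paper: reduce \eqref{eq:l1 norms} for $X\succeq 0$ to the operator-norm bound $\|M_n-I\|_\infty\le r$ with $M_n=\tfrac{d}{kn}\sum_jP_{V_j}$, establish that $\tfrac dk\|P_Vv\|^2$ is sub-exponential uniformly in $d$, and then run Bernstein plus an $\varepsilon$-net (the paper packages the last two steps as the singular-value estimate of Proposition~\ref{th:singular vershynin} on $P=\sqrt{d/k}(P_{V_1},\ldots,P_{V_n})^*$, whose Gram matrix is exactly $nM_n$). One correction, though: the sub-exponential tail you need is \emph{not} the content of Proposition~\ref{lemma:ck}; that proposition gives a \emph{lower} bound $\tfrac dk\,\mathbb E\bigl|\|P_Vz_1\|^2-t\|P_Vz_2\|^2\bigr|\ge u_k$ and is used only for the rank-$2$ estimate in Theorem~\ref{th:ck}. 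The relevant result is Lemma~\ref{lemma:2xi}, which proves sub-exponentiality directly from the beta moments $\mathbb E\|P_Vx\|^{2p}=(k/2)_p/(d/2)_p$ --- a cleaner route than your Gaussian-ratio splitting, since it avoids the auxiliary event $\|g_{1:d}\|^2\ge d/2$ altogether.
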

By using the spectral decomposition of $X$, we see that condition \eqref{eq:l1 norms} is equivalent to 
\begin{equation*}
(1-r)\frac{nk}{d}\|x\|^2 \leq \sum_{j=1}^n \|P_{V_j}(x)\|^2 \leq (1+r)\frac{nk}{d}\|x\|^2, \quad\text{for all $x\in\R^d$}.
\end{equation*}
In other words, $\{V_j\}_{j=1}^n$ is a fusion frame that is not too far from being tight. It turns out that we can follow the lines in \cite{Candes:uq} to prove Theorem \ref{th:singular} after having established some analogy between $k=1$ and $k>1$. If $k=1$, the random variable $d\|P_{V}(x)\|^2$, for $x\in S^{d-1}$, is sub-exponential. We can verify the analogue result for $k>1$: 
\begin{lemma}\label{lemma:2xi}
If $V$ is a random subspace distributed according to $\sigma_k$ on $\mathcal{G}_{k,d}$, then, for any $x\in S^{d-1}$, 
\begin{equation}\label{eq:sub-exponential for beta}
\sup_{p\geq 1}p^{-1}\big(\mathbb{E}(\frac{d}{k}\|P_{V}(x)\|^2)^{p}\big)^{1/p}\leq 1.
\end{equation}
\end{lemma}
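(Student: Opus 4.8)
The plan is to reduce \eqref{eq:sub-exponential for beta} to an explicit moment computation and then to an elementary inequality which, crucially, does not involve the ambient dimension $d$; this last feature is what will later make the constants in Theorem~\ref{th:singular} independent of $d$. The argument is meant to be the $k>1$ analogue of the familiar $k=1$ estimate, with the $\Beta$-moments playing the role that the $\chi^2_1$-moments play when $k=1$.

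\emph{Step 1: the moments of $\|P_V(x)\|^2$.} Fix $x\in S^{d-1}$. Since $\sigma_k$ is $O(\R^d)$-invariant, $\langle P_x,P_V\rangle=\|P_V(x)\|^2$ has the $\Beta(k/2,(d-k)/2)$-distribution: realizing $V$ through a Haar-distributed $d\times k$ matrix with orthonormal columns, $\|P_V(x)\|^2$ is distributed as $\frac{g_1^2+\dots+g_k^2}{g_1^2+\dots+g_d^2}$ for a standard Gaussian $g\in\R^d$. Consequently, for every integer $p\ge1$,
\begin{equation*}
\mathbb{E}\big(\|P_V(x)\|^2\big)^p=\frac{(k/2)_p}{(d/2)_p}=\prod_{i=0}^{p-1}\frac{k/2+i}{d/2+i};
\end{equation*}
this is also the value of the tight $p$-fusion frame constant $A_p$ for the measure $\sigma_k$ itself, cf.~\eqref{eq:new bound formula}, and for $p=1$ it gives $\mathbb{E}(\tfrac dk\|P_V(x)\|^2)=1$. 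For real $p\ge1$ the same identity holds with $(a)_p$ read as $\Gamma(a+p)/\Gamma(a)$.

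\emph{Step 2: the dimension-free bound.} Multiplying by $(d/k)^p$ and bounding each denominator $d/2+i$ from below by $d/2$,
\begin{equation*}
\mathbb{E}\Big(\tfrac dk\|P_V(x)\|^2\Big)^p=\prod_{i=0}^{p-1}\frac{d(k/2+i)}{k(d/2+i)}\le\prod_{i=0}^{p-1}\Big(1+\tfrac{2i}{k}\Big)\le\prod_{i=0}^{p-1}(1+2i),
\end{equation*}
the last step using $k\ge1$; note that $d$ has disappeared. Since $1+3+\dots+(2p-1)=p^2$, AM--GM gives $\big(1\cdot3\cdots(2p-1)\big)^{1/p}\le p$, hence $\big(\mathbb{E}(\tfrac dk\|P_V(x)\|^2)^p\big)^{1/p}\le p$, which is \eqref{eq:sub-exponential for beta} for integer $p$. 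For non-integer $p\ge1$ I would run the same estimate on the Gamma-function form of Step~1, using $\Gamma(d/2+p)\ge(d/2)^p\Gamma(d/2)$ and $\Gamma(k/2+p)\le(kp/2)^p\Gamma(k/2)$, so that multiplying these yields $\mathbb{E}(\tfrac dk\|P_V(x)\|^2)^p\le p^p$ once more.

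\emph{Where the (mild) difficulty lies.} I do not expect a genuine obstacle. The only point needing care is the second Gamma-ratio inequality $\Gamma(k/2+p)\le(kp/2)^p\Gamma(k/2)$: it holds with equality at $p=1$, and for $p>1$ it follows from the sharper digamma bound $\psi(y)\le\log y-\tfrac1{2y}$ together with a short monotonicity check, whereas the cruder $\psi(y)\le\log y$ is not quite enough when $k=1$ and $p$ is just above $1$. One should also make sure, as in Step~2, to bound the denominator $d/2+i$ against $d/2$ and not against anything depending more delicately on $d$, so that the resulting sub-exponential constant is dimension-free; everything else is a routine transcription of the $k=1$ argument.
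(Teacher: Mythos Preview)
Your argument is correct and follows essentially the same route as the paper: identify the $\Beta(k/2,(d-k)/2)$ distribution of $\|P_V(x)\|^2$, write down its moments $(k/2)_p/(d/2)_p$, and bound---the paper simply says ``an induction over $p$'' for this last step, and your product estimate plus AM--GM makes that induction explicit. Your additional discussion of non-integer $p$ goes beyond what the paper does (its induction is over integers), but since integer moments already control the sub-exponential norm up to an absolute constant this extra care, while correct, is not needed for the applications.
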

\begin{proof}[Proof of Lemma \ref{lemma:2xi}]
The distribution of $\|P_V(x)\|^2$ does not depend on the particular choice of $x\in S^{d-1}$ and is beta distributed with parameters $(\frac{k}{2},\frac{d-k}{2})$. Thus, its moments are given by 
\begin{equation}\label{eq:equalities for higher moments}
\mathbb{E}\|P_V(x)\|^{2p} = \frac{(k/2)_p}{(d/2)_p}=\frac{k(k+2)\cdots(k+2p-2)}{d(d+2)\cdots(d+2p-2)},
\end{equation}
which coincide with the tight $p$-fusion frame bounds \eqref{eq:new bound formula} when the weights are constant. An induction over $p$ yields \eqref{eq:sub-exponential for beta}.
\end{proof}
Note that Lemma \ref{lemma:2xi} says that $\frac{d}{k}\|P_{V}(x)\|^2$ is a sub-exponential random variable with a bound in \eqref{eq:sub-exponential for beta} that does not depend on $d$. The latter is one of the main ingredients to verify Theorem \ref{th:singular} along the lines in \cite{Candes:uq}, see Appendix \ref{app:2} for the details.

\subsection{Operator norm estimates on $\|\mathcal{F}_n(X) \|_{\ell_1}$ for symmetric rank-$2$ matrices}\label{sec:5.2}
We shall verify the condition \eqref{eq:2}:
\begin{theorem}\label{th:ck}
Let $k$ be fixed. There is a constant $u>0$ such that, for $0<r<1$ fixed, there exist constants $c,C>0$, such that, for all $n\geq c d$ and $\{V_j\}_{j=1}^n\subset\mathcal{G}_{k,d}$ independently chosen random subspaces with identical distribution $\sigma_k$,  the inequality 
 \begin{equation*}
\frac{1}{n}\|\mathcal{F}_n(X)\|_{\ell_1} \geq u (1-r)\|X\|_\infty,
\end{equation*}
for all symmetric rank-$2$ matrices $X$, holds with probability at least $1-e^{-C n} $. 
\end{theorem}
Note that the probability in the estimate in Theorem \ref{th:ck} is uniform in $X$. The proof of Theorem \ref{th:ck} is based on the following Proposition that was derived for $k=1$ in \cite{Candes:uq}. Our proof for $k>1$ is original:
\begin{proposition}\label{lemma:ck}
Let $k$ be fixed and $\{V_j\}_{j=1}^n\subset\mathcal{G}_{k,d}$ be independently chosen random subspaces with identical distribution $\sigma_k$. 
There is a constant $u>0$ such that, for all $-1\leq t\leq 1$ and $z_1,z_2\in S^{d-1}$ with $z_1\perp z_2$,
\begin{equation*}
\frac{d}{k}\mathbb{E}\big|\|P_V(z_1)\|^2-t\|P_V(z_2)\|^2\big| \geq u.
\end{equation*}
\end{proposition}
\begin{proof}
The sphere is two-point homogeneous and $\sigma_k$ is invariant under orthogonal transformation so that we can restrict the analysis to the first two canonical basis vectors $e_1$ and $e_2$. 
Since the integral is always nonzero, we only need to take care of the limit $d\rightarrow \infty$. We first see that 
\begin{align*}
\frac{d}{k}\mathbb{E}\big|\|P_V(e_1)\|^2-t\|P_V(e_2)\|^2\big| &  = \frac{d}{k}\int_{\mathcal{G}_{k,d}}\big|\|P_V(e_1)\|^2-t\|P_V(e_2)\|^2\big|d\sigma_k(V)\\
& = \frac{d}{k}\int_{\mathcal{V}_{2,d}}\big| \sum_{i=1}^k m^2_{i,1} - t\sum_{i=1}^k m^2_{i,2}\big|d\nu_2(M),
\end{align*}
where $\mathcal{V}_{2,d}=\{M=(m_{i,j})\in\R^{d\times 2}: M^*M=I \}$ denotes the Stiefel-manifold endowed with the standard probability measure $\nu_2$. If $M$ is a random matrix, distributed according to $\nu_2$, then, according to \cite[Proposition 7.5]{Eaton:1989fk}, the upper $k\times 2$ block of $M$ multiplied by $d$ converges in distribution (for $d\rightarrow \infty$) towards a random $k\times 2$ matrix whose entries are standard normal i.i.d.. Let us denote the underlying probability measure on $\R^{k\times 2}$ by $\mathcal{N}(0,I_k\otimes I_2)$. The convergence in distribution implies that, for $d\rightarrow \infty$,
\begin{equation*}
d\mathbb{E}\big|\|P_V(e_1)\|^2-t\|P_V(e_2)\|^2\big|   \rightarrow \int_{\R^{k\times 2}}\!\big| \|N(e_1)\|^2 - t\|N(e_2)\|^2\big|d\mathcal{N}(0,I_k\otimes I_2) (N).
\end{equation*}
Since the right-hand side is bigger than $0$, for all $-1\leq t\leq 1$, compactness and continuity arguments suffice to conclude the proof.
\end{proof}
For the complete proof of Theorem \ref{th:ck} that is based on Proposition \ref{lemma:ck}, we refer to Appendix \ref{app:3}.

\subsection{The dual certificate $Y$}
To derive the dual certificate $Y$, we can follow the ideas in \cite{Candes:2012fk,Candes:uq,Demanet:2012uq} adapted to $k>1$. We will use Proposition \ref{prop:2} from the deterministic setting and the Remark \ref{remark:13}. Let $\{V_j\}_{j=1}^n\subset\mathcal{G}_{k,d}$ be independently chosen random subspaces with identical distribution $\sigma_k$. 
The choice 
\begin{equation*}
Y_1:=2I-2P_x
\end{equation*}
would satisfy both conditions in \eqref{eq:3} but may not lie in the range of 
\begin{equation*}
\mathcal{F}_n^* : \R^n\rightarrow \mathscr{H},\qquad (\lambda_j)_{j=1}^n \mapsto \frac{d}{k}\sum_{j=1}^n \lambda_j P_{V_j}. 
\end{equation*}
Thus, we aim to determine an appropriate sequence $(\lambda_j)_{j=1}^n$ such that $\frac{d}{k}\sum_{j=1}^n \lambda_j P_{V_j}$ ``approximates'' $Y_1$. First, let us rewrite 
\begin{equation*}
Y_1=(k+2)I-(2P_x+kI).
\end{equation*}
For $ a:= \frac{2d(d-k)}{(d+2)(d-1)}$ and $b:= \frac{d(kd+k-2)}{(d+2)(d-1)}$, we observe that $a\rightarrow 2$ and $b\rightarrow k$ when $d$ tends to infinity, so that we can approximate $Y_1$ by
\begin{equation*}
Y_2: =   (k+2)I-(aP_x+bI).
\end{equation*}
Since Proposition \ref{prop:2} implies
\begin{equation}\label{eq:cubature representation}
aP_x+bI=\frac{d^2}{k}\mathbb{E} \|P_{V}(x)\|^2 P_{V}
\end{equation} 
and $\frac{d}{k}\mathbb{E}P_{V} = I$ holds, we obtain 
\begin{equation*}
Y_2 = \frac{d}{k}\mathbb{E}((k+2-d\|P_V(x)\|^2)P_V).
\end{equation*}
The sample mean converges towards the population mean, so
\begin{equation*}
Y_3 := \frac{d}{nk}\sum_{j=1}^n((k+2-d\|P_{V_j}(x)\|^2)P_{V_j})
\end{equation*}
approximates $Y_2$, and we observe that $Y_3$ lies in the range of $\mathcal{F}_n^*$. In view of tail bound estimates, it will be advantageous to use an additional cut-off similar to the one in \cite{Candes:uq}: keeping in mind that \eqref{eq:equalities for higher moments} yields $\frac{d^2}{k}\mathbb{E}\|P_{V_j}(x)\|^4\rightarrow k+2$, when $d$ tends to infinity, we define the dual certificate by
\begin{equation}\label{dual certificate definition}
Y : = \frac{d}{nk}\sum_{j=1}^n \lambda_j P_{V_j}, \quad\text{where}\quad \lambda_j=\alpha-d\|P_{V_j}(x)\|^21_{E_j},
\end{equation}
$\alpha=\frac{d^2}{k}\mathbb{E}(\|P_{V_j}(x)\|^4 1_{E_j})$, and $E_j = \{ \sqrt{\frac{d}{k}}\|P_{V_j}(x)\|\leq 2\beta_\gamma \}$ for some constant $\beta_\gamma>0$. Obviously, $Y$ is in the range of $\mathcal{F}_n^*$ and, as outlined above, can be considered as an approximation to $Y_1=2I-2P_x$. 

The above definitions will be used throughout the remaining part of this paper. 

\subsubsection{Dual certificate: $Y_T$}\label{sec:5.3}
We shall verify that the dual certificate defined by \eqref{dual certificate definition} satisfies the first condition in \eqref{eq:3}. The following theorem is the analogy to \cite[Lemma 1]{Demanet:2012uq} and \cite[Lemma 2.3]{Candes:2012fk}:
\begin{theorem}\label{th:last one BB}
Let $x\in S^{d-1}$ be fixed. There are constants $c,C>0$ such that, for $n\geq c d$, 
\begin{equation}\label{eq:dual certificate orth}
\|Y_{T}\|_1  \leq \gamma
\end{equation}
with probability at least $1-e^{-Cn}$. 
\end{theorem}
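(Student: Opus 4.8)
The plan is to exploit the fact that the constant $\alpha$ and the truncation $1_{E_j}$ in \eqref{dual certificate definition} are chosen precisely so that $\mathbb{E}[Y]$ lies in $T^\perp$; consequently $Y_T$ is a normalized sum of i.i.d.\ \emph{centered} terms, and the bound will follow from scalar and vector Bernstein estimates combined with a net argument over $S^{d-1}$, the latter being where the hypothesis $n\ge cd$ enters.

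First I would establish $\mathbb{E}[Y]\in T^\perp$. Writing $Y=\frac1n\sum_{j=1}^n Z_j$ with $Z_j=\frac{d}{k}\lambda_j P_{V_j}$ and using $\frac{d}{k}\mathbb{E}P_{V_j}=I$, one gets $\mathbb{E}[Y]=\alpha I-W$ with $W:=\frac{d^2}{k}\mathbb{E}\big(\|P_{V_j}(x)\|^2 1_{E_j}P_{V_j}\big)$. Since $W$ is invariant under the stabilizer of $x$ in $O(\R^d)$, it has the form $W=c_1P_x+c_2(I-P_x)$; taking the inner product with $P_x$ and using the definition of $\alpha$ yields $c_1=\langle W,P_x\rangle=\frac{d^2}{k}\mathbb{E}\big(\|P_{V_j}(x)\|^4 1_{E_j}\big)=\alpha$, hence $\mathbb{E}[Y]=(\alpha-c_2)(I-P_x)$, which is orthogonal to $T$ because $I-P_x\perp T$. (Without the truncation this is exactly the content of \eqref{eq:cubature representation} and \eqref{eq:expec}; the truncation forces the symmetry argument.) Thus $Y_T=\frac1n\sum_j\big((Z_j)_T-\mathbb{E}(Z_j)_T\big)$ is a centered i.i.d.\ sum. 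Using that $Y$ is symmetric, so that $Y_T=\langle Y,P_x\rangle P_x+P_xY(I-P_x)+(I-P_x)YP_x$ with $P_xY(I-P_x)=xw^*$ for $w:=(I-P_x)Yx=\frac{d}{nk}\sum_j\lambda_j(I-P_x)P_{V_j}x$, the triangle inequality for the nuclear norm gives $\|Y_T\|_1\le|\langle Y,P_x\rangle|+2\|w\|$, and both $\langle Y,P_x\rangle$ and $w$ have mean zero by the previous step. It therefore suffices to prove $|\langle Y,P_x\rangle|\le\gamma/4$ and $\|w\|\le\gamma/4$, each with probability at least $1-e^{-Cn}$.

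For the scalar part $\langle Y,P_x\rangle=\frac1n\sum_j\frac{d}{k}\lambda_j\|P_{V_j}(x)\|^2$ I would first note that $|\lambda_j|$ is bounded by a constant uniformly in $d$ and $j$: on $E_j^c$ one has $\lambda_j=\alpha$, and $0\le\alpha\le\frac{d^2}{k}\mathbb{E}\|P_{V_j}(x)\|^4$, which is bounded by a constant depending only on $k$ by \eqref{eq:equalities for higher moments}, while on $E_j$ one has $d\|P_{V_j}(x)\|^2\le 4k\beta_\gamma^2$. Since $\frac{d}{k}\|P_{V_j}(x)\|^2$ is sub-exponential with a bound independent of $d$ by Lemma \ref{lemma:2xi}, the summands $\frac{d}{k}\lambda_j\|P_{V_j}(x)\|^2$ are sub-exponential uniformly in $d$, and Bernstein's inequality (exactly as in the proof of Theorem \ref{th:ck}) gives $|\langle Y,P_x\rangle|\le\gamma/4$ with probability at least $1-2e^{-C_1 n}$. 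For the off-diagonal part I would fix a unit vector $v$, put $v':=(I-P_x)v$ (so $\|v'\|\le1$ and $v'\perp x$), and write $\langle w,v\rangle=\frac1n\sum_j\frac{d}{k}\lambda_j\langle P_{V_j}x,v'\rangle$. Because $P_{V_j}$ is an orthogonal projector, $|\langle P_{V_j}x,v'\rangle|=|(P_{V_j}x)^*(P_{V_j}v')|\le\|P_{V_j}(x)\|\,\|P_{V_j}(v')\|$, whence $\frac{d}{k}|\langle P_{V_j}x,v'\rangle|\le\frac12\big(\frac{d}{k}\|P_{V_j}(x)\|^2+\frac{d}{k}\|P_{V_j}(v')\|^2\big)$ is sub-exponential with a $d$-independent bound by Lemma \ref{lemma:2xi}. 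Hence, for each fixed $v$, $\langle w,v\rangle$ is a centered sum of i.i.d.\ sub-exponential variables with $d$-independent parameters, and Bernstein gives $\mathbb{P}(|\langle w,v\rangle|\ge\gamma/8)\le 2e^{-C_2 n}$. Taking a $\tfrac12$-net $\mathcal N\subset S^{d-1}$ with $|\mathcal N|\le 5^d$ and using $\|w\|\le 2\max_{v\in\mathcal N}\langle w,v\rangle$, a union bound yields $\|w\|\le\gamma/4$ with probability at least $1-2\cdot 5^d e^{-C_2 n}$, which is at least $1-e^{-Cn}$ once $n\ge cd$ with $c$ large enough relative to $C_2$. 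Intersecting the two events completes the proof.

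The main obstacle is the off-diagonal estimate. For $k>1$ the rows of the measurement matrices are stochastically dependent, so it is not automatic that the one-dimensional marginals $\langle w,v\rangle$ have a sub-exponential norm that is \emph{uniform in the ambient dimension $d$}; yet this uniformity is precisely what keeps the per-point failure probability small enough to survive the union bound over a net of size $5^d$ when $n\gtrsim d$. The Cauchy--Schwarz bound $|\langle P_{V_j}x,v'\rangle|\le\|P_{V_j}(x)\|\,\|P_{V_j}(v')\|$ reduces this to the single-vector moment estimate of Lemma \ref{lemma:2xi}, and this is the genuinely new point compared with the $k=1$ argument of \cite{Candes:uq}; the remaining bookkeeping (constants, the precise form of Bernstein, the net argument) is parallel to that reference.
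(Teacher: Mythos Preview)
Your proof is correct and takes a genuinely different route from the paper's.

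The paper also reduces to controlling the first column $y=Yx$ (noting $\|Y_T\|_1\le 2\|y\|$ after specializing to $x=e_1$), but then proceeds via a \emph{factorization} rather than centering: writing $P_{V_j}=Q_jQ_j^*$ with $Q_j\in\R^{d\times k}$ having orthonormal columns, it sets
\[
Z=\sqrt{\tfrac{d}{k}}\,(Q_1,\dots,Q_n)\in\R^{d\times kn},\qquad h=\sqrt{\tfrac{d}{k}}\big(\lambda_j Q_j^* e_1\big)_{j=1}^n\in\R^{kn},
\]
so that $y=\frac{1}{n}Zh$. It then bounds $\|h\|^2\lesssim n$ by sub-exponential concentration (this is where Lemma~\ref{lemma:2xi} enters) and bounds $\|Zq\|$ for unit $q$, obtaining $\|y\|^2\lesssim d/n$; the hypothesis $n\ge cd$ then makes this $\le\gamma$. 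No centering of $Y_T$ is invoked, and no $\varepsilon$-net appears.

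Your argument instead makes the centering $\mathbb{E}[Y_T]=0$ explicit---a clean consequence of the choice of $\alpha$ and of the stabilizer-of-$x$ invariance, which the paper never states for this theorem---and then treats the scalar $\langle Y,P_x\rangle$ and the vector $w=(I-P_x)Yx$ separately by Bernstein, the latter combined with a $\tfrac12$-net. What this buys you: every Bernstein application is to a genuinely mean-zero sub-exponential sum whose parameters are $d$-independent (thanks to your Cauchy--Schwarz reduction $|\langle P_{V_j}x,v'\rangle|\le\|P_{V_j}x\|\,\|P_{V_j}v'\|$ and Lemma~\ref{lemma:2xi}), so the per-direction failure probability is $e^{-Cn}$ and the $5^d$ net is absorbed by $n\ge cd$. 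The paper's route is shorter and avoids the net, but its key step---the uniform bound on $\|Zq\|$---is exactly where the dependence among the blocks $Q_j$ has to be handled; your reduction to one-dimensional marginals is a legitimate and arguably more robust way around that difficulty.
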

\begin{proof}
First, we suppose that $x=e_1$ and take care of the general case later. We observe that  
$\|Y_T\|_1\leq \sqrt{2}\|Y_T\|_{HS}\leq 2\sqrt{2}\|y\|^2)$, where $y\in\R^d$ is the first column of $Y$ and $\|\cdot\|_{HS}$ denotes the Frobenius norm. We split $P_{V_j}=Q_jQ_j^*$, such that $Q_j\in\R^{d\times k}$ with orthonormal columns. By using 
\begin{equation*}
Z = \sqrt{\frac{d}{k}}(Q_1,\ldots,Q_n)\in\R^{d\times kn},\qquad h = \sqrt{\frac{d}{k}}\begin{pmatrix} \lambda_1 Q^*_1e_1\\ \vdots \\ \lambda_nQ^*_n e_1  \end{pmatrix}\in\R^{kn},
\end{equation*} 
and $h_j=\sqrt{\frac{d}{k}}\lambda_jQ^*_je_1\in\R^k$, for $j=1,\ldots,n$, we see that $\|y\|^2 = \frac{1}{n^2}\|Zh\|^2$. According to Lemma \ref{lemma:2xi}, $\|h_j\|^2=\lambda_j^2\frac{d}{k}\|P_{V_j}(e_1)\|^2$ is sub-exponential, and \cite[Corollary 5.17]{Vershynin:2012fk} implies
\begin{equation}\label{eq:above estimate}
\mathbb{P}\big(\|h\|^2 - \mathbb{E}\|h\|^2\geq n\big) \leq 2e^{-C_1n},
\end{equation}
for some constant $C_1>0$. Since $\alpha\leq k+2$, we observe that there is a constant $C_2>0$ such that $\mathbb{E}\|h\|^2 \leq C_2 n$. Thus, the above estimate \eqref{eq:above estimate} implies that there is a constant $C_3$ such that 
\begin{equation}\label{eq:est a}
\mathbb{P}\big(\|h\|^2\geq n\big) \leq 2e^{-C_3n}.
\end{equation}
For $n>\log(2)/C_3$, the factor $2$ can be put into a constant in the exponential, say $C>0$. 

For $q\in\R^{kn}$ with $\|q\|=1$ and $q=(q_j)_{j=1}^n$, where $q_j\in\R^k$, we obtain
\begin{equation}\label{eq:est b}
\|Zq\|^2 \leq\frac{d}{k}\sum_{j=1}^n \|Q_j q_j\|^2 = \frac{d}{k}\sum_{j=1}^n \|q_j\|^2 =d/k,
\end{equation}
where we have used that the columns of $Q_j$ are orthonormal. By combining \eqref{eq:est a} with \eqref{eq:est b}, we obtain 
\begin{equation*}
\|y\|^2 = \frac{1}{n^2}\|Zh\|^2 = \frac{1}{n^2}\|h\|^2 \|Z\frac{h}{\|h\|}\|^2\leq \frac{d}{kn}
\end{equation*}
with probability at least $1-e^{-Cn}$. Thus, for sufficiently large $c>0$, the condition $n\geq cd$ implies \eqref{eq:dual certificate orth}. 

To conclude the proof, we need to allow general $x\in S^{d-1}$. Note that there exists an orthogonal matrix $U$ such that $x=Ue_1$. We observe that $T_x=U T_{e_1} U^*$ and $P_{UV_j}=U^*P_{V_j} U$. Therefore, the definition $Y_x:=UY_{e_1}U^*$, where $Y_{e_1}$ is the dual certificate w.r.t.~${e_1}$, is in the range of the map $\tilde{\mathcal{F}}_n$ that corresponds to $\{UV_j\}_{j=1}^n$. The latter subspaces are also i.i.d.~according to $\sigma_k$. Since $(Y_x)_{T_x}= UY_{T_{e_1}}U^*$, we also derive $\|(Y_x)_{T_x}\|_1 = \|Y_{T_{e_1}}\|_1$.

\end{proof}

\subsubsection{Dual certificate: $Y_{T^\top}$}\label{sec:5.4}
Let us verify that the dual certificate $Y$ in \eqref{dual certificate definition} satisfies the second condition in \eqref{eq:3}. Indeed, we prove a slightly stronger result:
\begin{theorem}\label{th:second condition new}
Let $x\in S^{d-1}$ be fixed. For all $0<\varepsilon<1/2$, there is $\delta\geq 3/2$ and constants $c,C>0$ such that, for $n\geq c d$, 
\begin{equation}\label{eq:dual certificate orth 2}
\|Y_{T^\perp} - \delta I_{T^\perp}\|_\infty  \leq \varepsilon
\end{equation}
with probability at least $1-e^{-Cn}$. 
\end{theorem}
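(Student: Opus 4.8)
Following the covering-argument strategy of \cite{Candes:uq} and of the proof of Theorem \ref{th:ck}, the plan is to combine a net over the sphere with the a priori spectral estimate of Proposition \ref{th:singular vershynin}. As in the proof of Theorem \ref{th:last one BB}, choosing an orthogonal $U$ with $x=Ue_1$ reduces the claim to $x=e_1$: the family $\{UV_j\}_{j=1}^n$ is again i.i.d.\ $\sigma_k$, the associated certificate is $Y_x=UY_{e_1}U^*$, one has $T_x=UT_{e_1}U^*$ and $I_{T_x^\perp}=UI_{T_{e_1}^\perp}U^*$, and the operator norm is conjugation-invariant. So fix $x=e_1$.

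\emph{The mean.} Using $\frac{d}{k}\mathbb E(P_{V_j})=I$ one gets $\mathbb E(Y)=\alpha I-M$, where $M:=\frac{d^2}{k}\mathbb E\big(\|P_{V_j}(x)\|^2 1_{E_j}P_{V_j}\big)$. Since the joint distribution of $(1_{E_j},\|P_{V_j}(x)\|^2)$ and $P_{V_j}$ is invariant under $\mathrm{Stab}(x)$, the matrix $M$ commutes with $\mathrm{Stab}(x)$ and is therefore of the form $M=a'P_x+b'I$ (the case $\beta_\gamma=\infty$ is exactly \eqref{eq:cubature representation}). As $P_x\in T$ and $I_{T^\perp}=I-P_x$, restriction to $T^\perp$ annihilates the $P_x$-terms, so $(\mathbb E Y)_{T^\perp}=\delta I_{T^\perp}$ with $\delta:=\alpha-b'$. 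A short computation from \eqref{eq:equalities for higher moments} and \eqref{eq:cubature representation} gives $\delta\to 2$ as $\beta_\gamma,d\to\infty$, the truncation error being controlled uniformly in $d$ by Lemma \ref{lemma:2xi}; hence $\delta\geq 3/2$ once $\beta_\gamma$ is taken large enough (enlarging $\beta_\gamma$ only enlarges the constant $c$ in $n\geq cd$). Since $Z\mapsto (I-P_x)Z(I-P_x)$ is the orthogonal projection onto $T^\perp$ in $\mathscr H$ and has operator norm $1$, it suffices to show $\|(Y-\mathbb E Y)_{T^\perp}\|_\infty\leq\varepsilon$, i.e., using $w^*(Y-\delta I)w=w^*Yw-\delta$ for $w\in S^{d-1}\cap x^\perp$, that $\sup_{w\in S^{d-1}\cap x^\perp}|w^*Yw-\delta|\leq\varepsilon$ with probability at least $1-e^{-Cn}$.

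\emph{A priori bound and covering.} Because of the cut-off on $E_j$, the scalars $\lambda_j$ are deterministically bounded, $|\lambda_j|\leq L=L(k,\beta_\gamma)$. From $-\sum_j|\lambda_j|P_{V_j}\preceq\sum_j\lambda_jP_{V_j}\preceq\sum_j|\lambda_j|P_{V_j}$ we get $\|Y\|_\infty\leq\frac{L}{n}\big\|\sum_{j=1}^n\frac{d}{k}P_{V_j}\big\|_\infty=\frac{L}{n}s_{\max}(P)^2$; applying Proposition \ref{th:singular vershynin} with $t=\sqrt d$ yields, for $n\geq c_1d$, a bound $\|Y\|_\infty\leq C_Y$ with $C_Y=C_Y(k,\beta_\gamma)$ and probability at least $1-2e^{-cd}$. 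On this event $w\mapsto w^*Yw$ is $2C_Y$-Lipschitz on the sphere. Now fix a $\rho$-net $\mathcal N$ of $S^{d-1}\cap x^\perp$ with $\rho=\varepsilon/(8C_Y)$, so $|\mathcal N|\leq e^{C'd}$. For fixed $w\in\mathcal N$, $w^*Yw=\frac1n\sum_j\zeta_j$ with $\zeta_j:=\frac{d}{k}\lambda_j\|P_{V_j}(w)\|^2$ i.i.d.; since $|\lambda_j|\leq L$ and $\frac{d}{k}\|P_{V_j}(w)\|^2$ is sub-exponential with a $d$-independent bound (Lemma \ref{lemma:2xi}), the $\zeta_j$ are sub-exponential with $\psi_1$-norm $\leq L$ and mean $\delta$, and the Bernstein inequality of \cite{Vershynin:2012fk} gives $\mathbb P(|w^*Yw-\delta|\geq\varepsilon/2)\leq 2e^{-c_2n\varepsilon^2}$. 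A union bound over $\mathcal N$, together with $n\geq c_1d$ for $c_1$ large enough (depending on $\varepsilon,k,\beta_\gamma$), makes the failure probability $\leq e^{-Cn}$. On the intersection with the a priori event, every $w\in S^{d-1}\cap x^\perp$ is within $\rho$ of some $w'\in\mathcal N$, so $|w^*Yw-\delta|\leq 2C_Y\rho+\varepsilon/2<\varepsilon$, which is \eqref{eq:dual certificate orth 2}.

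\emph{Main obstacle.} The difficulty is obtaining linear sample complexity $n\gtrsim d$: a direct matrix Bernstein estimate of $\|Y-\mathbb E Y\|_\infty$ loses a factor $\log d$, because the per-summand operator norm $\|\frac{d}{k}\lambda_jP_{V_j}\|_\infty$ is of order $d$. The covering argument circumvents this because along a \emph{fixed} direction $w$ the relevant scalar $\zeta_j$ has sub-exponential norm of order one, \emph{uniformly in $d$}, by Lemma \ref{lemma:2xi}; scalar Bernstein at each of the $e^{O(d)}$ net points then suffices. The two facts that make the net-to-sphere passage legitimate — and that I expect to need the most care to set up cleanly in the block-dependent setting — are the dimension-free moment bound of Lemma \ref{lemma:2xi} and the a priori $O(1)$ control of $\|Y\|_\infty$ furnished by Proposition \ref{th:singular vershynin}.
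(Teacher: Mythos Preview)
Your argument is correct and takes a genuinely different route from the paper. The paper splits $Y=Y^{(0)}-Y^{(1)}$ with $Y^{(0)}=\frac{\alpha}{n}\sum_j\frac{d}{k}P_{V_j}$ and $Y^{(1)}=\frac{1}{n}\sum_j d\|P_{V_j}(e_1)\|^21_{E_j}\frac{d}{k}P_{V_j}$; it controls $\|Y^{(0)}-\alpha I\|_\infty$ directly via the singular-value bounds of Proposition~\ref{th:singular vershynin}, and handles $Y^{(1)}_{T^\perp}$ by the \emph{matrix} Bernstein inequality \cite[Theorem~5.29]{Vershynin:2012fk}, absorbing the dimensional prefactor $d$ via $n\geq c_6\ln d$. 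You instead keep $Y$ in one piece and work direction by direction: along a fixed $w\perp x$ the summand $\zeta_j=\frac{d}{k}\lambda_j\|P_{V_j}(w)\|^2$ is sub-exponential with a $d$-free $\psi_1$-bound (Lemma~\ref{lemma:2xi} together with the deterministic bound $|\lambda_j|\leq L$ coming from the cut-off), so scalar Bernstein plus an $\varepsilon$-net of size $e^{O(d)}$ gives the result directly. This is more elementary and sidesteps matrix Bernstein; in particular you never need to bound $K=\sup_j\|X_j\|_\infty$, which for the paper's summands $X_j=\frac{d^2}{k}\|P_{V_j}(e_1)\|^21_{E_j}(P_{V_j})_{T^\perp}-b_0I_{T^\perp}$ grows like $d$ and makes that step delicate. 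The price is that you need the extra a~priori spectral bound on $\|Y\|_\infty$ to control the Lipschitz constant in the net-to-sphere passage, but that is precisely Proposition~\ref{th:singular vershynin} again, so nothing new is required.

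One small correction: in the a~priori step you invoke Proposition~\ref{th:singular vershynin} with $t=\sqrt{d}$, giving failure probability $2e^{-cd}$. This does not decay in $n$ when $d$ is fixed, so it cannot be absorbed into the claimed $e^{-Cn}$. Take instead $t=\eta\sqrt{n}$ for a small fixed $\eta>0$: then for $n\geq c_1d$ one has $s_{\max}(P)\leq(1+C/\sqrt{c_1}+\eta)\sqrt{n}$, hence $\|Y\|_\infty\leq L(1+C/\sqrt{c_1}+\eta)^2=:C_Y$, with failure probability $2e^{-c\eta^2 n}$, and the rest of your argument goes through verbatim.
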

Note that \eqref{eq:dual certificate orth 2} implies $Y_{T^\perp}\succeq I_{T^\perp}$.  The proof follows the analogous results in \cite[Lemma 2]{Demanet:2012uq} and \cite[Lemma 2.3]{Candes:2012fk}, where $k=1$ is addressed, see our Appendix \ref{app:3.5} for the details.

\subsection{Proof of Theorem \ref{th:finale!}}
After having generalized the intermediate results from $k=1$ to the general case $k\geq 1$, we can assemble these findings as in \cite{Candes:2012fk,Candes:uq,Demanet:2012uq} to prove Theorem \ref{th:finale!}. The details are presented in Appendix \ref{app:4}.

\begin{remark}
Note that our proof for involving semidefinite programming in the phase retrieval problem is guided by the ideas in \cite{Candes:uq,Candes:2012fk,Demanet:2012uq}. Meanwhile the golfing scheme as originally proposed in \cite{Gross:2011fk} has been used for constructing dual certificates in rank-$1$ phase retrieval with semidefinite programming enabling a partial derandomization \cite{Gross:2013fk}, so that $\sigma_{1,d}$ can be replaced with a probability distribution of smaller support. However, signal recovery probability decreases as well. Analogous results also hold for rank-$k$ phase retrieval \cite{Ehler:2014qc}. It is worth mentioning that the golfing scheme was also used to treat phase retrieval with sparse signals \cite{Li:2012uq} and with coded diffraction patterns \cite{Cands:2014bf}. 
\end{remark}

\subsection{Stability} 
In many applications of interest, we may have access to the exact subspaces $\{V_j\}_{j=1}^n$ but the actual measurements are noisy, so that we need to reconstruct the signal from observations of the form
\begin{equation}\label{eq:f}
f_j = \|P_{V_j}(x)\|^2+\omega_j, \quad j=1,\ldots,n,
\end{equation}
where $\omega_j$ is some distortion term. If we replace the feasibility problem of the semi-definite program with the constrained $\ell_1$-minimization 
\begin{equation}\label{eq:l1 min problem}
\arg\min_{X\in\mathscr{H}} \|\mathcal{F}_n(X) - f\|_{\ell_1},\quad\text{subject to}\quad X\succeq 0,
\end{equation}
then we obtain the same stability properties as in \cite{Candes:2012fk}. Indeed, we can straightforwardly follow the lines of the proof in \cite[Theorem 1.3]{Candes:2012fk} for $k=1$ to derive our next statement, which covers $k\geq 1$:
\begin{theorem}\label{th:stab}
There are constants $c_0,c_1,c_2>0$ such that, if $n\geq c_1d$ and $\{V_j\}_{j=1}^n\subset\mathcal{G}_{k,d}$ are chosen independently identically distributed according to $\sigma_k$,  then, for all $x\in\R^d$ and $f$ given by \eqref{eq:f}, the solution $\hat{X}$ to \eqref{eq:l1 min problem} obeys
\begin{equation}\label{eq:stab eq}
\|\hat{X}-xx^*\|_{HS} \leq c_0 \frac{\|\omega\|_{\ell_1}}{n}
\end{equation}
with probability at least $1-e^{-c_2 n}$. 
\end{theorem}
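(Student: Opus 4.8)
The plan is to split the argument, exactly as for Theorem~\ref{th:finale!}, into a purely deterministic stability estimate and the probabilistic verification of its hypotheses. By the rescaling remark preceding Theorem~\ref{th:candes} we may assume $x\in S^{d-1}$, and we adopt the normalization of Section~\ref{sec:prefinal}, writing $f=\mathcal{F}_n(xx^*)+\omega$ so that the residual at $X_0:=xx^*$ equals $-\omega$. The deterministic claim to be established is: if $B$, $A_k$, $\gamma<A_k/B$ are as in Theorem~\ref{th:candes}, if \eqref{eq:1} and \eqref{eq:2} hold, and if there is $Y=\mathcal{F}_n^*(\mu)$ with $\|Y_T\|_1\le\gamma$, $Y_{T^\perp}\succeq I_{T^\perp}$ and $\|\mu\|_{\ell_\infty}\le L/n$, then every minimizer $\hat X$ of \eqref{eq:l1 min problem} satisfies $\|\hat X-xx^*\|_{HS}\le c_0\|\omega\|_{\ell_1}/n$, with $c_0$ depending only on $A_k$, $B$, $\gamma$, $L$.

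To prove the claim, put $H:=\hat X-X_0$ and split $H=H_T+H_{T^\perp}$. Optimality of $\hat X$ and feasibility of $X_0$ give $\|\mathcal{F}_n(\hat X)-f\|_{\ell_1}\le\|\mathcal{F}_n(X_0)-f\|_{\ell_1}=\|\omega\|_{\ell_1}$, hence $\|\mathcal{F}_n(H)\|_{\ell_1}\le 2\|\omega\|_{\ell_1}$. Since $X_0\in T$ and $\hat X\succeq 0$, the component $H_{T^\perp}=\hat X_{T^\perp}$ is the compression of $\hat X$ to $x^\perp$, hence positive semidefinite, so $\|H_{T^\perp}\|_1=\trace(H_{T^\perp})$. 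Pairing $H$ with the certificate, using $\langle\mathcal{F}_n^*(\mu),H\rangle=\langle\mu,\mathcal{F}_n(H)\rangle$ together with $Y_{T^\perp}\succeq I_{T^\perp}$, $H_{T^\perp}\succeq 0$, $\|Y_T\|_1\le\gamma$ and $|\langle Y_T,H_T\rangle|\le\|Y_T\|_1\|H_T\|_\infty$, yields
\begin{equation*}
\|H_{T^\perp}\|_1-\gamma\|H_T\|_\infty\ \le\ \langle Y_{T^\perp},H_{T^\perp}\rangle+\langle Y_T,H_T\rangle\ =\ \langle Y,H\rangle\ \le\ \tfrac{2L}{n}\|\omega\|_{\ell_1}.
\end{equation*}
On the other hand, $\mathcal{F}_n(H_T)=\mathcal{F}_n(H)-\mathcal{F}_n(H_{T^\perp})$ with \eqref{eq:1} applied to $H_{T^\perp}\succeq 0$ gives $\tfrac1n\|\mathcal{F}_n(H_T)\|_{\ell_1}\le\tfrac2n\|\omega\|_{\ell_1}+B\|H_{T^\perp}\|_1$, and \eqref{eq:2} bounds the left side below by $A_k\|H_T\|_\infty$. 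Eliminating $\|H_{T^\perp}\|_1$ between the two inequalities and using $A_k-B\gamma>0$ gives $\|H_T\|_\infty\le c'\|\omega\|_{\ell_1}/n$, hence also $\|H_{T^\perp}\|_1\le c''\|\omega\|_{\ell_1}/n$. Finally $\|H\|_{HS}\le\|H_T\|_{HS}+\|H_{T^\perp}\|_{HS}\le\sqrt2\,\|H_T\|_\infty+\|H_{T^\perp}\|_1$, where we used that elements of $T$ have rank at most $2$ and $\|\cdot\|_{HS}\le\|\cdot\|_1$; this proves the deterministic claim with a dimension-free $c_0$.

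It remains to certify the hypotheses with the stated probability. Conditions \eqref{eq:1} and \eqref{eq:2} hold with exponentially small failure probability for $n\ge cd$ by Theorems~\ref{th:singular} and~\ref{th:ck}, and a dual certificate with $\|Y_T\|_1\le\gamma$, $Y_{T^\perp}\succeq I_{T^\perp}$ is furnished by Theorems~\ref{th:last one BB} and~\ref{th:second condition new}; the crucial extra point is that this certificate is the explicit one of \eqref{dual certificate definition}, and the cut-off on $E_j$ forces $|\lambda_j|\le\alpha+4k\beta_\gamma^2\le k+2+4k\beta_\gamma^2=:L$, a bound independent of $d$, so that $Y=\mathcal{F}_n^*(\lambda/n)$ with $\mu=\lambda/n$ satisfying $\|\mu\|_{\ell_\infty}\le L/n$ as required. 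All four events occur simultaneously for fixed $x$ with probability at least $1-e^{-Cn}$ once $n\ge cd$, and the $\epsilon$-net argument from the proof of Theorem~\ref{th:finale!} upgrades this to a bound uniform over $x\in S^{d-1}$, and then over all $x\in\R^d$ via the rescaling remark. Since every constant feeding into $c_0$ (namely $A_k$, $B$, $\gamma$, $L$) is dimension-free, the conclusion follows. I expect no conceptual obstacle here: the work goes into checking that the certificate built in Section~\ref{sec:prefinal} for the noiseless problem additionally carries the $\ell_\infty$ control on its coefficients and that all constants remain independent of $d$, after which the argument is exactly that of \cite{Candes:2012fk}.
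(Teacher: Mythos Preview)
Your proposal is correct and carries out in detail precisely the argument the paper itself omits: the paper gives no proof of Theorem~\ref{th:stab}, stating only that one can follow the proof of the corresponding Theorem~1.3 in \cite{Candes:2012fk}. Your deterministic stability estimate, combined with the observation that the explicit certificate of \eqref{dual certificate definition} has coefficients $|\lambda_j|\le k+2+4k\beta_\gamma^2$ bounded independently of $d$, is exactly that argument adapted to the $k>1$ setting, and the probabilistic part simply reuses Theorems~\ref{th:singular}, \ref{th:ck}, \ref{th:last one BB}, \ref{th:second condition new} and the $\epsilon$-net step as in the proof of Theorem~\ref{th:finale!}.
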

It was also pointed out in \cite{Candes:2012fk} that \eqref{eq:stab eq} implies 
\begin{equation*}
\min \big( \|\hat{x} -x\|,\|\hat{x}+x\|\big) \leq c_0 \min\big(\|x\|,\frac{\|\omega\|_{\ell_1}}{n \|x\|}\big),
\end{equation*}
where $\hat{x}=\sqrt{\alpha} x_0$ and $\alpha$ is the largest eigenvalue of $\hat{X}$ with normalized eigenvector $x_0$. 
Hence, we also have a bound on the deviation to the exact signal when the measurements are noisy and $k>1$.


\section{Numerical experiments}\label{sec:final}

We shall present some numerical experiments illustrating Theorem \ref{th:finale!} and the choice of $k$. Let $x\in S^{d-1}$ and observe that $V\in\mathcal{G}_{k,d}$ is uniformly distributed if and only if $P_V = Z(Z^*Z)^{-1}Z^*$ for some $Z\in\R^{d\times k}$ with independent standard normal entries, cf.~\cite[Theorem 2.2.2]{Chikuse:2003aa}. Thus, we can easily generate pseudo-random orthogonal projectors $\{P_{V_j}\}_{j=1}^n$. Since $\|P_{{V_j}^\perp}(x)\|^2+\|P_{V_j}(x)\|^2=1$, we shall restrict us to $k\leq d/2$. We follow the numerical experiments in \cite{Candes:uq}, where the measurement vector is $ f=(\|P_{V_j}(x)\|^2)_{j=1}^n$. 
As in \cite{Candes:uq}, we use the software package Templates for First-Order Conic Solvers (TFOCS) \cite{Becker:2011fk}.  
If $\hat{X}$ is the solution, then we define $\pm \hat{x}\in S^{d-1}$ as the normalized eigenvector corresponding to the largest eigenvalue of $\hat{X}$. If $x$ is not supposed to lie on the sphere, then we can use the largest eigenvalue to rescale the normalized eigenvector.

\subsection{Examples of signal reconstruction}
We illustrate Theorem \ref{th:finale!} by following a numerical test from \cite{Candes:2011fk}. As in \cite{Candes:2011fk} for $k=1$, the computed approximation is visually indistinguishable from the test signal when $k=10$ and $k=20$, where $d=128$ and $n=6d$, cf.~Fig.~\ref{fig:pure signal}. 
\begin{figure}[h]
\centering
\subfigure[$k=10$]{
\includegraphics[width=.45\textwidth]{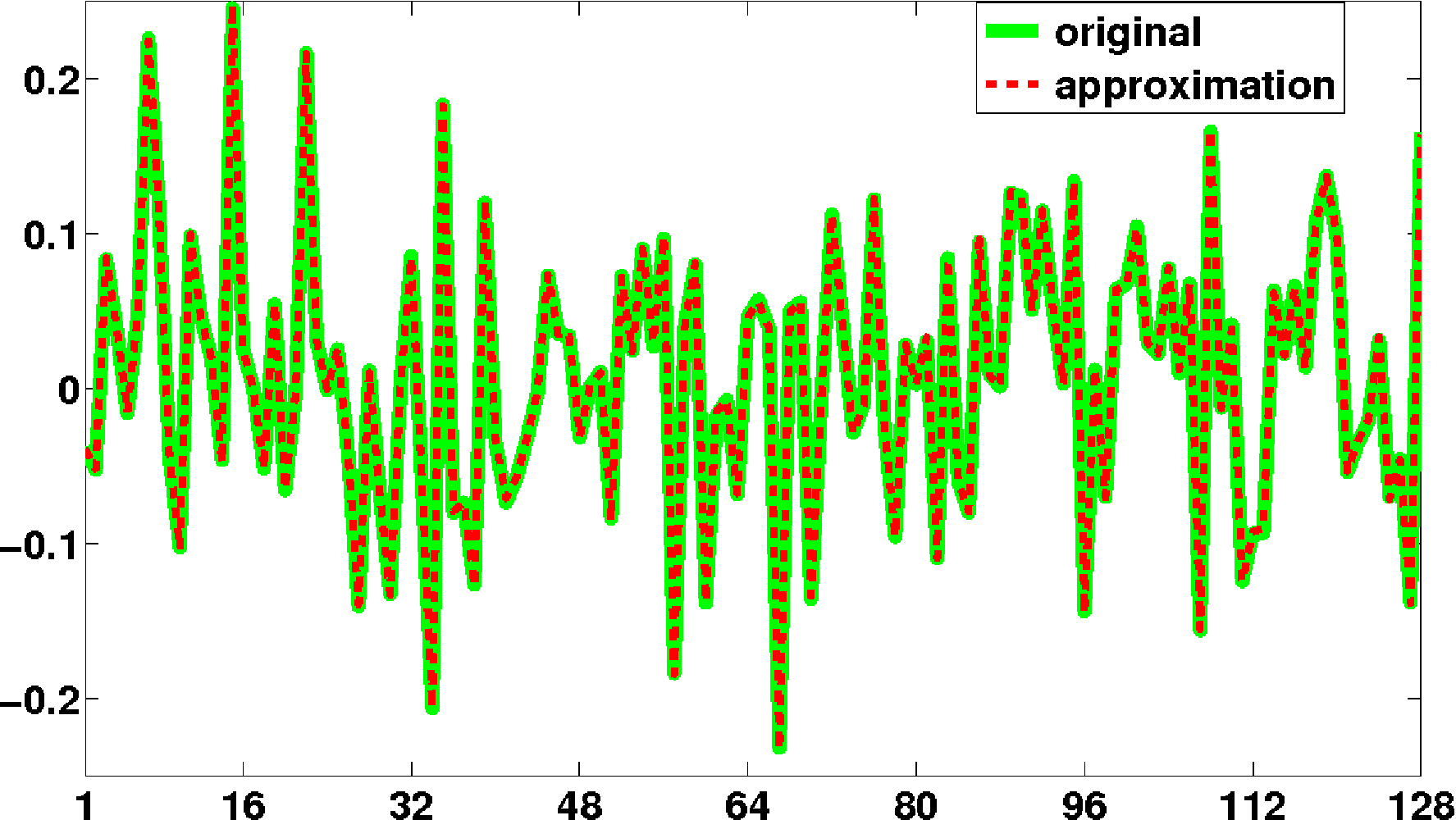}}
\subfigure[$k=20$]{
\includegraphics[width=.45\textwidth]{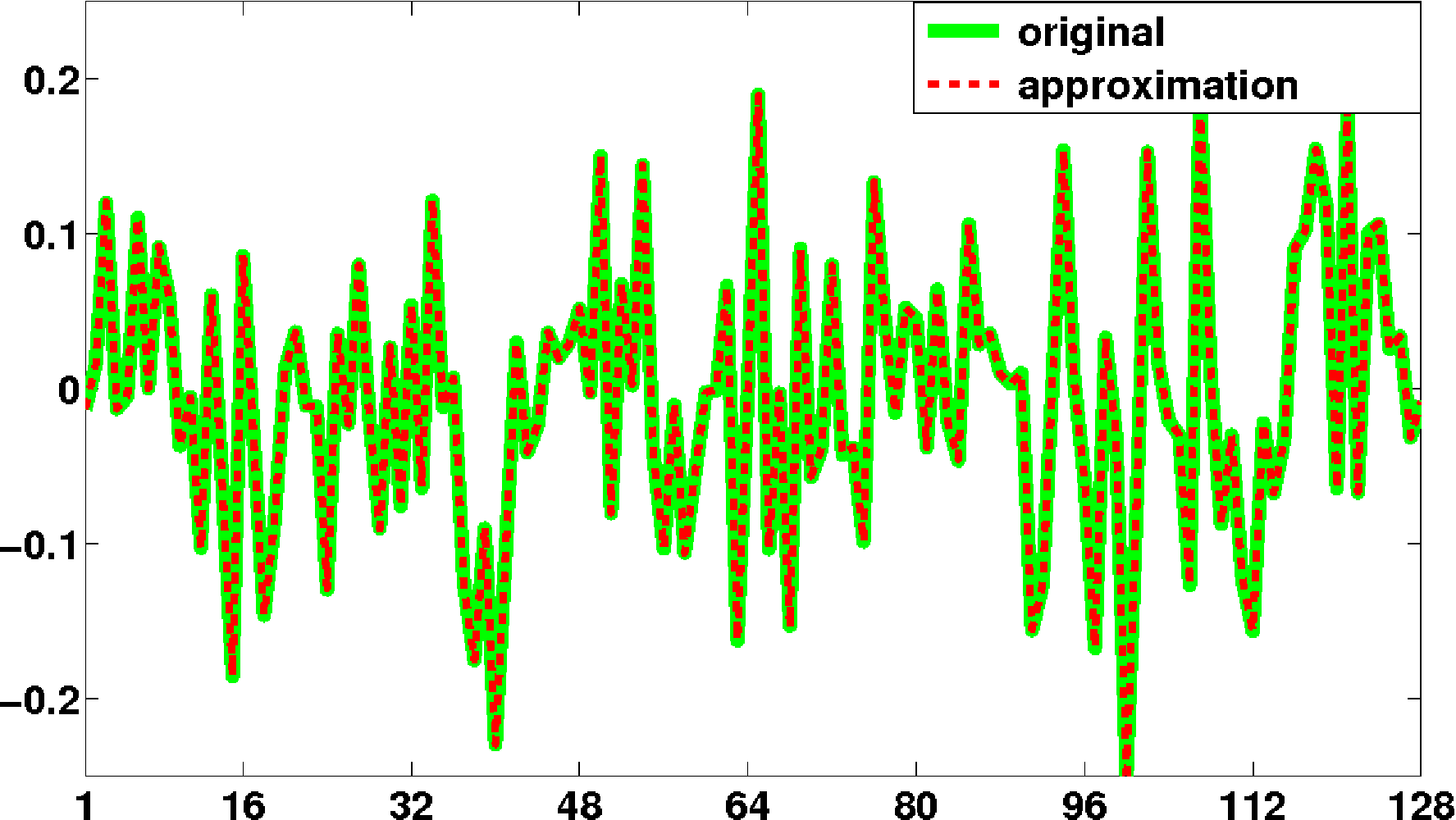}}
\caption{We choose the original signal $x$ uniformly distributed on the sphere $S^{d-1}$. As in \cite{Candes:2011fk}, where $k=1$ was used, the approximation is computed for $d=128$ and $n=6d$. Here, also for $k>1$, we see that original and computed signal are visually indistinguishable.}\label{fig:pure signal}
\end{figure}
\subsection{Optimal choice of $k$}
We investigate on the optimal choice of $k$. Indeed, for $d=6,8,10,12$, we check on the reconstruction rate in dependence of the number of subspaces $n$ when $k$ varies between $1$ and $d/2$. We see in Figure \ref{fig:choice of k} that, for small $n$, the proposed algorithm yields higher recovery rates when $k$ is selected bigger than $1$, and the choice $k=\lceil d/4 \rceil$ appears to be optimal. Here, the recovery rate is computed as the number of reconstructions deviating less than $10^{-2}$ from the original signal divided by the number of repeats ($1000$). 
\begin{figure}[h]
\subfigure[$d=6$]{\includegraphics[width=.4\textwidth]{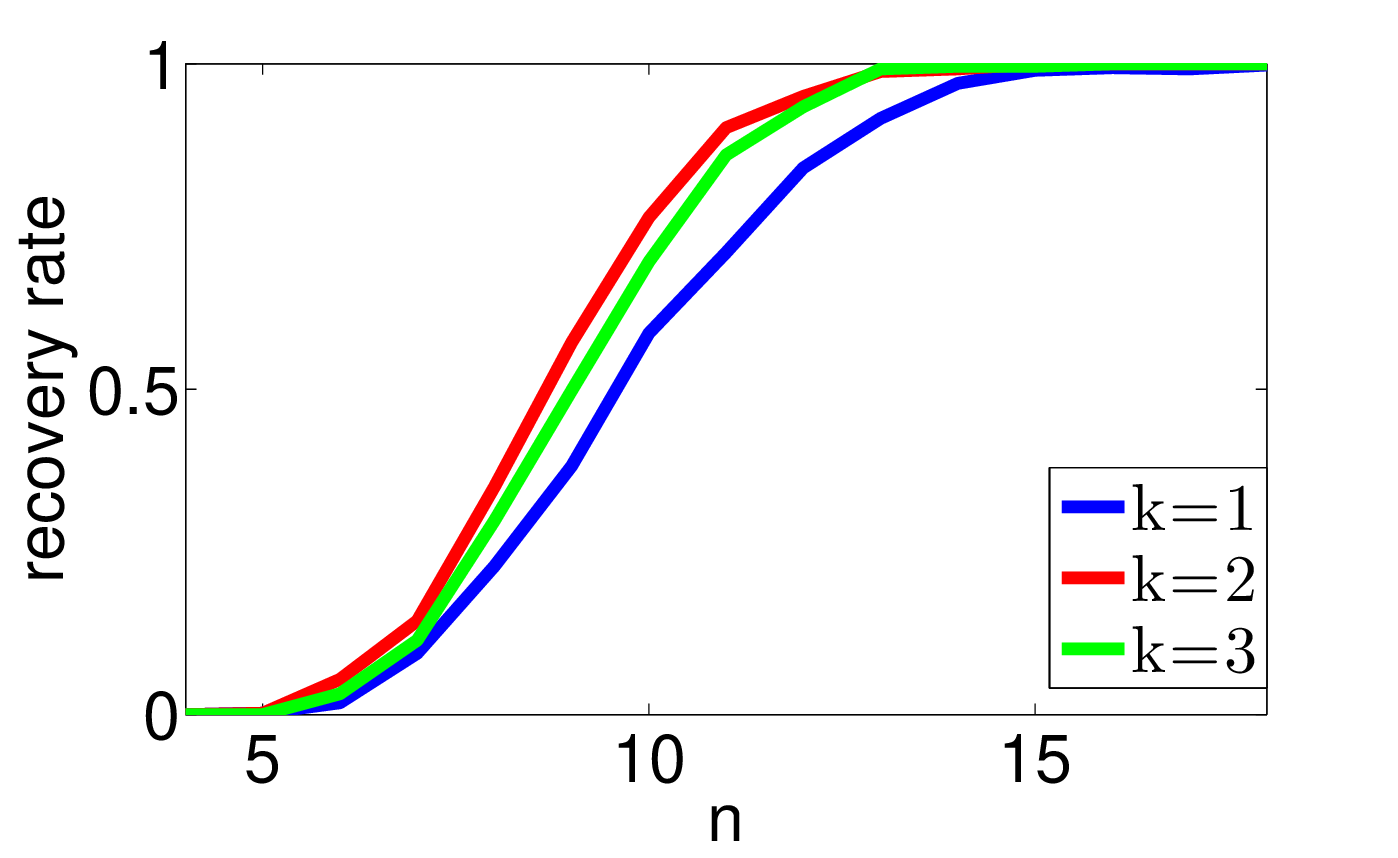}}
\subfigure[$d=8$]{\includegraphics[width=.4\textwidth]{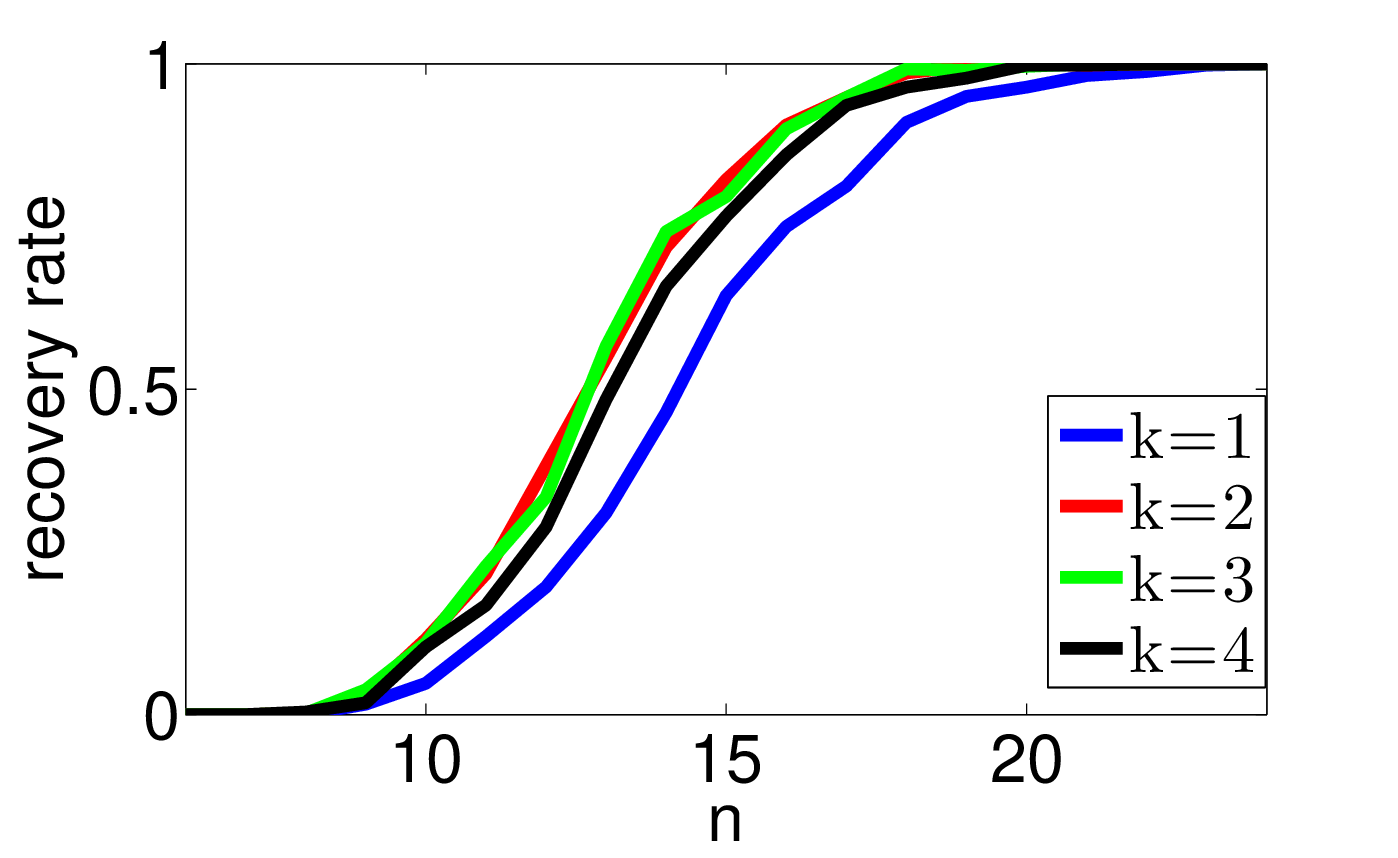}}

\subfigure[$d=10$]{\includegraphics[width=.4\textwidth]{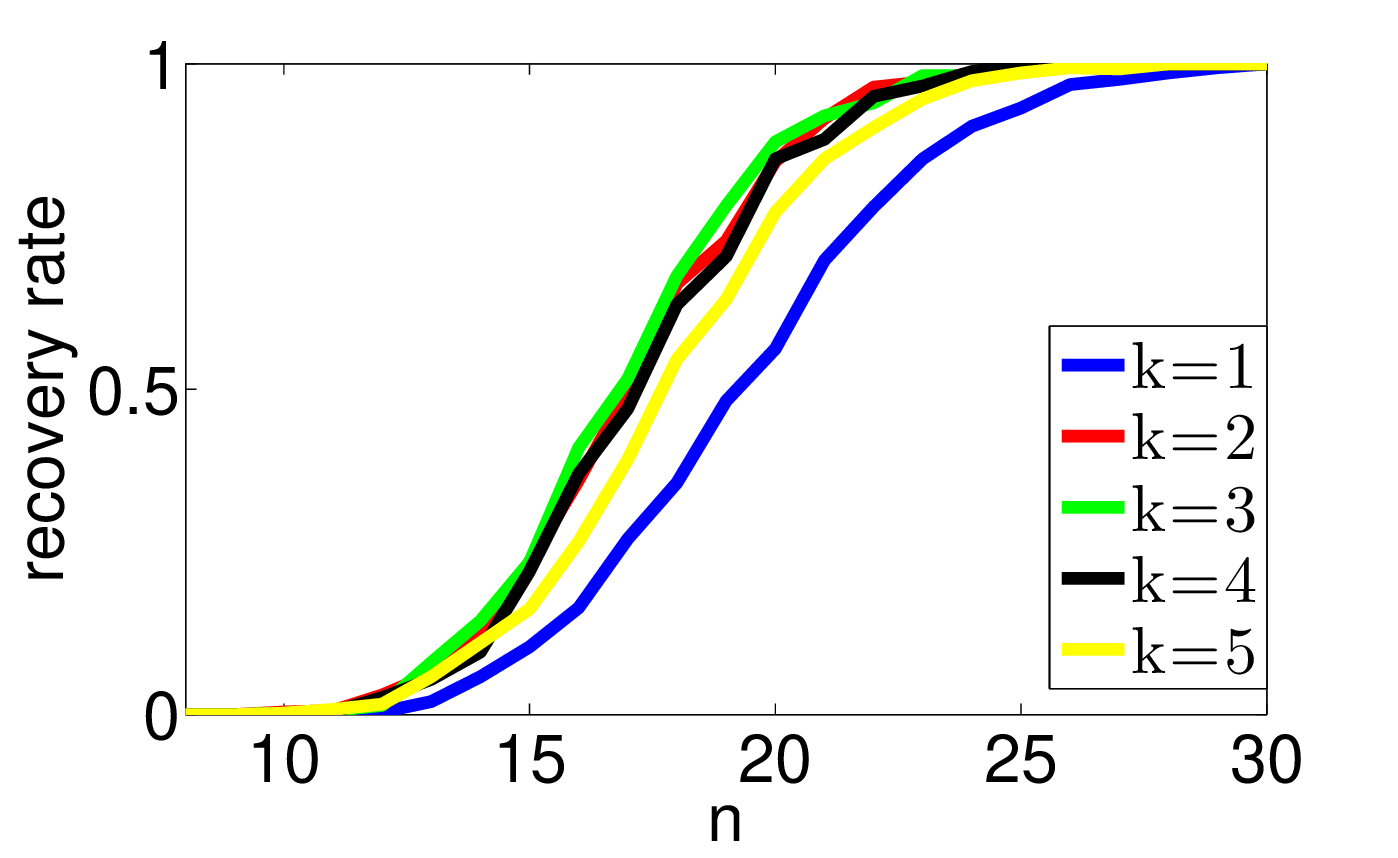}}
\subfigure[$d=12$]{\includegraphics[width=.4\textwidth]{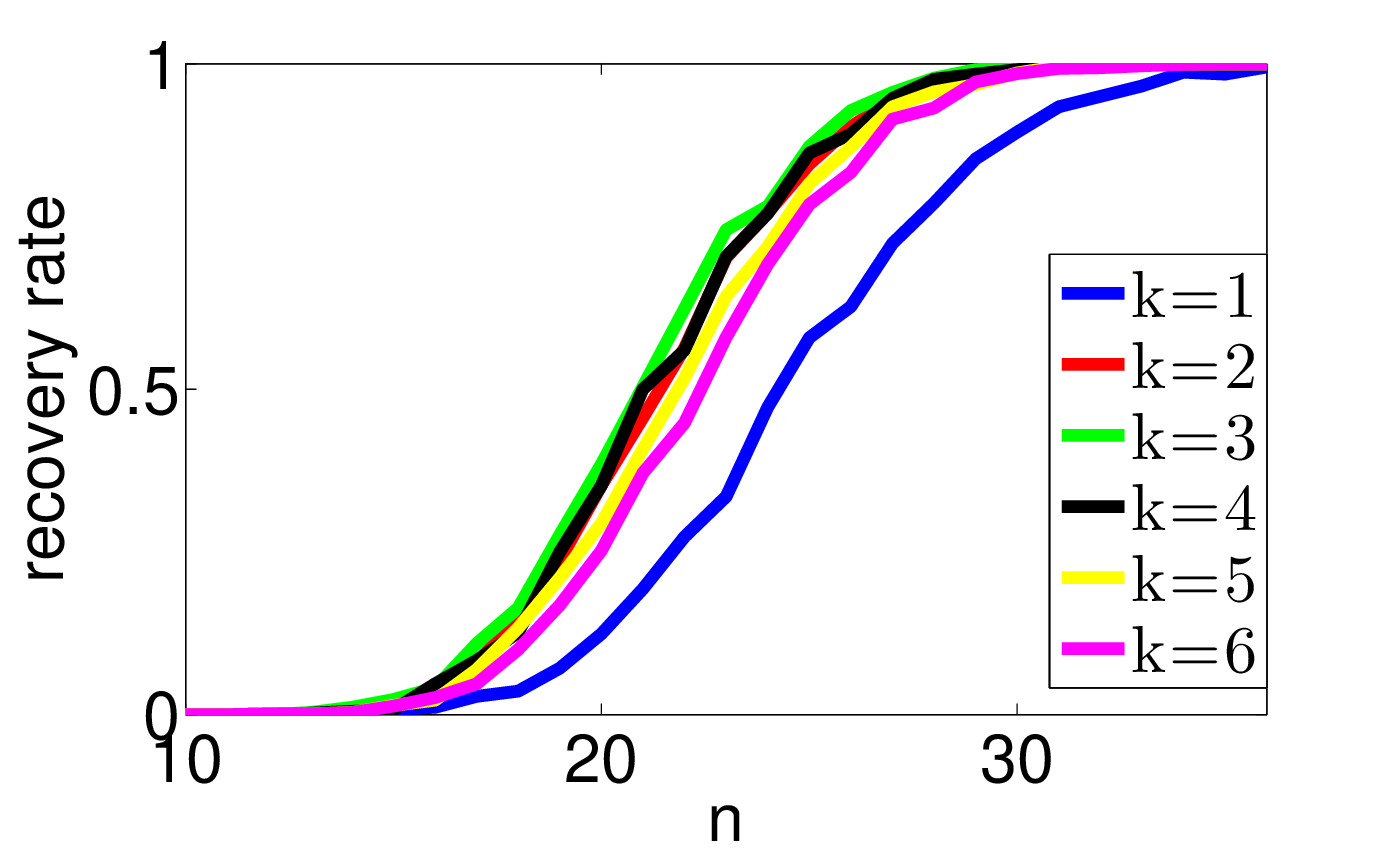}}
\caption{When the subspace number $n$ is small but the subspace dimension $k$ can be selected freely, then $k=1$ is clearly not the optimal choice. It appears that $k=\lceil d/4\rceil$ yields the best results.}\label{fig:choice of k} 
\end{figure}

\section{Brief outline of the complex case}\label{sec:complex}

If we deal with complex signals $x\in\C^d$ and complex $k$-dimensional subspaces $\{V_j\}_{j=1}^n$, then there is again a canonical notion of cubature, cf.~\cite{Roy:2010fk}, and the complex analogue of Proposition \ref{prop:2} holds with adjusted constants $a_1=\frac{(d-1)d(d+1)}{k(d-k)}$ and $a_2=\frac{kd-1}{d-k}$. 

For random subspaces, Theorem \ref{th:finale!} can also be derived in the complex setting. The underlying Theorem \ref{th:candes} holds the same way for complex signals and subspaces, so that we need to verify the respective conditions as in the real case. If the subspaces are chosen i.i.d.~from the Haar measure on the complex Grassmann space, then $\frac{d}{k}\|P_{V_j}(x)\|^2$ is unitarily invariant in $x$ and sub-exponential since  $\mathbb{E}\|P_{V_j}(x)\|^{2p}=\frac{(k)_p}{(d)_p}$, cf.~\cite{Bachoc:2010aa}. Thus, the analogue of Lemma \ref{lemma:2xi} holds. Proposition \ref{lemma:ck} can be extended to the complex case, because the underlying result from \cite[Proposition 7.5]{Eaton:1989fk} has a complex version too.  
The formula \eqref{eq:cubature representation} still holds, only the constants $a$ and $b$ need adjustments, so that  the dual certificate $Y$ can be defined the same way as in \eqref{dual certificate definition}. Thus, we can follow the same proof strategy to cover the complex phase retrieval problem.



\section*{Acknowledgements} 
The authors would like to thank Thomas Bauer for valuable advice on the proof of Proposition \ref{Prop:algebraic system} and Pierre Thibault for discussions on diffraction imaging. The authors are also thankful to Frank Vallentin and Christian Reiher for discussions on the proof of Theorem \ref{th:finale!}. M.~E.~has been funded by the Vienna Science and Technology Fund (WWTF) through project VRG12-009. 

\bibliographystyle{amsplain}
\bibliography{biblio_ehler2}

\appendix 
\section{Proof of Theorem \ref{th:candes}}\label{app:1}
\begin{proof}
For $Z\in\mathscr{H}$ being positive semi-definite and satisfying $\mathcal{F}_n(Z)=f$, we choose $H:=Z-xx^*$ and aim to verify that $H=0$. Since 
\begin{equation*}
0 = \mathcal{F}_n(H)=\mathcal{F}_n(H_T)+\mathcal{F}_n(H_{T^\perp}),
\end{equation*}
and $H_{T^\perp}=Z_{T^\perp}$ is positive semi-definite, the Conditions \eqref{eq:2} and \eqref{eq:1} yield
\begin{equation}\label{eq:H}
A\|H_T\|_\infty \leq \frac{1}{n}\|\mathcal{F}_n(H_T)\|_{\ell_1} = \frac{1}{n}\|\mathcal{F}_n(H_{T^\perp})\|_{\ell_1} \leq B \|H_{T^\perp}\|_1.
\end{equation}
The range of $\mathcal{F}_n^*$ is orthogonal to the nullspace of $\mathcal{F}_n$, so that we derive
\begin{align*}
0 & = \langle H,Y\rangle  = \langle H_{T^\perp},Y_{T^\perp}\rangle+\langle H_T,Y_T\rangle. 
\intertext{The left-hand inequality of \eqref{eq:3} yields}
0 & \geq \langle H_{T^\perp},Y_{T^\perp}\rangle - \gamma \|H_T\|_\infty,
\intertext{and the right-hand inequality of \eqref{eq:3} leads to 
$\|H_{T^\perp}\|_1=\langle H_{T^\perp},I_{T^\perp}\rangle \leq \langle H_{T^\perp},Y_{T^\perp}\rangle$, so that we obtain
}
0 & \geq \|H_{T^\perp}\|_1-\gamma \|H_T\|_\infty \geq (\frac{A}{B}-\gamma)\|H_T\|_\infty,
\end{align*}
where we have used \eqref{eq:H}. Thus, $H_T=0$ must hold and hence also $H_{T^\perp}=0$, so that we have $Z=xx^*$.
\end{proof}

\section{Proof of Theorem \ref{th:singular}}\label{app:2}
The following result extends findings on the smallest and largest singular values $s_{\min}(P)$ and $s_{\max}(P)$ of a random matrix $P$ with independent sub-exponential rows in \cite[Theorem 5.39]{Vershynin:2012fk}. Here, we consider independent blocks but there are dependent rows within each block:
\begin{proposition}\label{th:singular vershynin}
Let $P:=\sqrt{\frac{d}{k}}\big( P_{V_1} ,\ldots, P_{V_n}\big) ^*\in\R^{nd\times d}$, in which $\{V_j\}_{j=1}^n$ are identically and  independently distributed according to $\sigma_k$ on $\mathcal{G}_{k,d}$. Then, for every $t\geq 0$, we have with probability at least $1-2\exp(-ct^2)$
\begin{equation*}
\sqrt{n}-C\sqrt{d}-t\leq s_{\min}(P)\leq s_{\max}(P)\leq \sqrt{n}+C\sqrt{d}+t,
\end{equation*}
where $c,C>0$ are absolute constant.
\end{proposition}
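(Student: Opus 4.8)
The plan is to follow the route of \cite[Theorem 5.39]{Vershynin:2012fk}: reduce the two-sided singular value bound to a spectral concentration estimate for $\frac{1}{n}P^*P-I$, and then exploit the fact that the block structure of $P$ (dependent rows inside each block, independent across blocks) is harmless, because only the block contributions enter the analysis.

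\textbf{Step 1: algebraic reduction.} Since each $P_{V_j}$ is a symmetric idempotent, $P^*P=\sum_{j=1}^n\frac{d}{k}P_{V_j}^*P_{V_j}=\sum_{j=1}^n\frac{d}{k}P_{V_j}$, so $s_{\min}(P)^2$ and $s_{\max}(P)^2$ are the extreme eigenvalues of $\sum_{j=1}^n\frac{d}{k}P_{V_j}$. By $O(\R^d)$-invariance $\mathbb{E}\big(\frac{d}{k}P_{V_j}\big)=I$ — this is the case $p=1$ of \eqref{eq:equalities for higher moments} — hence the matrices $Y_j:=\frac{d}{k}P_{V_j}-I$ are i.i.d., symmetric, centered, and $\frac{1}{n}P^*P-I=\frac{1}{n}\sum_{j=1}^nY_j$. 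By a standard conversion (cf.\ \cite[Lemma 5.36]{Vershynin:2012fk}), it suffices to prove that
\begin{equation*}
\Big\|\tfrac{1}{n}P^*P-I\Big\|_\infty\le\max(\delta,\delta^2),\qquad \delta:=C\Big(\sqrt{d/n}+t/\sqrt n\,\Big),
\end{equation*}
with probability at least $1-2\exp(-ct^2)$, since this forces $\sqrt n-C\sqrt d-t\le s_{\min}(P)\le s_{\max}(P)\le\sqrt n+C\sqrt d+t$ (when the lower bound is negative it is vacuous).

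\textbf{Step 2: fixed $x$ and a net.} Because $\frac{1}{n}P^*P-I$ is symmetric, $\big\|\frac{1}{n}P^*P-I\big\|_\infty=\sup_{x\in S^{d-1}}\big|\langle(\frac{1}{n}P^*P-I)x,x\rangle\big|$, and for a $\tfrac14$-net $\mathcal N$ of $S^{d-1}$ with $|\mathcal N|\le 9^d$ the supremum over $S^{d-1}$ is at most twice the supremum over $\mathcal N$. For fixed $x\in S^{d-1}$ one has $\langle(\frac{1}{n}P^*P-I)x,x\rangle=\frac{1}{n}\sum_{j=1}^n\big(\frac{d}{k}\|P_{V_j}(x)\|^2-1\big)$, an average of $n$ \emph{independent} scalars; each is centered and, by Lemma \ref{lemma:2xi}, sub-exponential with $\psi_1$-norm bounded by an absolute constant (centering costs only a universal factor). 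Bernstein's inequality for i.i.d.\ sub-exponential variables then gives $\mathbb{P}\big(|\langle(\frac{1}{n}P^*P-I)x,x\rangle|\ge\varepsilon\big)\le 2\exp(-cn\min(\varepsilon^2,\varepsilon))$, and a union bound over $\mathcal N$ yields
\begin{equation*}
\mathbb{P}\Big(\big\|\tfrac{1}{n}P^*P-I\big\|_\infty\ge 2\varepsilon\Big)\le 9^d\cdot 2\exp\!\big(-cn\min(\varepsilon^2,\varepsilon)\big).
\end{equation*}
Taking $\varepsilon=\max(\delta,\delta^2)$ with the constant $C$ in $\delta$ large enough that $cC^2$ dominates $\log 9$, the right-hand side is at most $2\exp(-c't^2)$, which is exactly the estimate required in Step 1 (after renaming constants and absorbing the harmless factor $2$).

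\textbf{The obstacle.} The only genuinely new point relative to the $k=1$ case of \cite{Candes:uq} and \cite[Theorem 5.39]{Vershynin:2012fk} is that the latter is stated for matrices whose \emph{rows} are independent, whereas here the rows within each block $\sqrt{d/k}\,P_{V_j}$ are dependent. The resolution — and the reason Lemma \ref{lemma:2xi} is isolated — is the observation in Step 2 that the net/Bernstein scheme only ever sees the quadratic form, i.e.\ the $n$ block contributions $\frac{d}{k}\|P_{V_j}(x)\|^2$, which \emph{are} independent and sub-exponential with a $d$-independent bound; the within-block dependence never matters. An alternative would be a sub-exponential matrix Bernstein inequality applied directly to $\sum_jY_j$, bypassing the net, but I expect the net argument above to be the cleaner match to the existing literature.
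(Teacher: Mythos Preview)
Your proposal is correct and takes essentially the same approach as the paper: the paper explicitly says to follow the proof of \cite[Theorem 5.39]{Vershynin:2012fk} using the net argument and the tools \cite[Lemmas 5.2, 5.4, 5.36 and Corollary 5.17]{Vershynin:2012fk}, with Lemma~\ref{lemma:2xi} supplying the uniform sub-exponential bound on the block contributions $\frac{d}{k}\|P_{V_j}(x)\|^2$. Your write-up is a faithful fleshing-out of that sketch, including the key observation that only the $n$ independent block quadratic forms enter the analysis, so the within-block row dependence is irrelevant; the only cosmetic slip is the factor of $2$ between the net bound $2\varepsilon$ and the target $\max(\delta,\delta^2)$, which is absorbed into the constant $C$.
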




The proof of Proposition \ref{th:singular vershynin} requires two lemmas for preparation:
\begin{lemma}[{\cite[Lemma 5.36]{Vershynin:2012fk}}]\label{lemma:vershynin}
If $B\in\R^{n\times d}$ satisfies 
$ 
\|B^*B - I\|_\infty\leq \max(\delta,\delta^2),
$ 
for some $\delta>0$, then 
\begin{equation}\label{eq:helper}
1-\delta \leq s_{\min}(B) \leq s_{\max}(B)\leq 1+\delta.
\end{equation}
Conversely, if $B$ satisfies \eqref{eq:helper}, then $\|B^*B - I\|_\infty\leq 3\max(\delta,\delta^2)$. 
\end{lemma}
An  $\varepsilon$-net $\mathcal{N}_\varepsilon$ is a finite subset of $ S^{d-1}$ such that to any element $x\in S^{d-1}$, there is an element in $\mathcal{N}_\varepsilon$ at distance less than or equals $\varepsilon$.
\begin{lemma}[{\cite[Lemma 5.4]{Vershynin:2012fk}}]\label{lemma:nets and so on}
Let $A\in\R^{d\times d}$ be symmetric, and let $\mathcal{N}_\varepsilon$ be an $\varepsilon$-net of $S^{d-1}$ for some $\varepsilon\in[0,\frac{1}{2})$. Then
\begin{equation*}
\|A\|_\infty = \sup_{x\in S^{d-1}} |\langle Ax,x\rangle|\leq (1-2\varepsilon)^{-1} \sup_{x\in\mathcal{N}_\varepsilon} |\langle Ax,x\rangle|.
\end{equation*}
\end{lemma}

\begin{proof}[Proof of Proposition \ref{th:singular vershynin}]
To verify Proposition \ref{th:singular vershynin}, we want to apply Lemma \ref{lemma:vershynin} with $B=\frac{1}{\sqrt{n}}P$. We shall explicitly derive the upper estimate on $s_{\max}(P)$. The lower estimate on $s_{\min}(P)$ follows from similar arguments. We must check that
\begin{equation}\label{eq:in proof somewhere}
\|\frac{1}{n}\frac{d}{k} \sum_{j=1}^n P_{V_j}-I\|_\infty\leq \max(\delta,\delta^2)=:\varepsilon,\quad\text{where}\quad \delta=C\sqrt{\frac{d}{n}} + \frac{t}{\sqrt{n}}.
\end{equation}
Let $\mathcal{N}$ be a $\frac{1}{4}$-net, so that an application of Lemma \ref{lemma:nets and so on} yields 
\begin{align*}
\|\frac{1}{n}P^*P-I\|_\infty & =\|\frac{1}{n}\frac{d}{k} \sum_{j=1}^n P_{V_j}-I\|_\infty \\
& \leq 2\max_{x\in\mathcal{N}} |\langle (\frac{1}{n}\frac{d}{k} \sum_{j=1}^n P_{V_j}-I)x,x\rangle |\\
&= 2\max_{x\in\mathcal{N}}|\frac{1}{n} \| Px\|^2 -1|.
\end{align*}
Thus, we must verify with the required probability that
\begin{equation*}
\max_{x\in\mathcal{N}}|\frac{1}{n} \| Px\|^2 -1|\leq \frac{\varepsilon}{2}.
\end{equation*}
To derive this estimate, we define random variables $Z_j=\sqrt{\frac{d}{k}} \|P_{V_j}(x)\|$ so that  
$ 
 \sum_{j=1}^n Z_j^2=\|P(x)\|^2 
$.  
Since $\mathbb{E}(Z_j^2)=1$, we can estimate 
\begin{align*}
\|Z^2_j\|_{\psi_1}&  :=\sup_{p\geq 1}p^{-1}(\mathbb{E}Z_j^{2p})^{1/p} \\
& = \frac{d}{k}\sup_{p\geq 1}p^{-1}(\mathbb{E}\|P_{V_j}(x)\|^{2p})^{1/p} \\
& = \frac{d}{k}\sup_{p\geq 1}p^{-1}(\frac{(k/2)_p}{(d/2)_p})^{1/p}\leq 1,
\end{align*}
where the last inequality is due to Lemma \ref{lemma:2xi}. According to \cite[Remark 5.18]{Vershynin:2012fk}, $\|Z^2_j-1\|_{\psi_1} \leq 2$, and we obtain from the Bernstein type inequality \cite[Corollary 5.17]{Vershynin:2012fk}
\begin{align*}
\mathbb{P}(|\frac{1}{n}\sum_{j=1}^n \frac{d}{k} \|P_{V_j}(x)\|^2-1|\geq \varepsilon/2)  & =\mathbb{P}(|\frac{1}{n}\sum_{j=1}^n Z^2_j-1|\geq \varepsilon/2)  \\
& \leq 2\exp(-cn\min(\frac{\varepsilon^2}{16},\frac{\varepsilon}{4}))\\
& = 2\exp(-n\frac{c}{16}\delta^2)\\
& \leq 2\exp(-\frac{c}{16}(C^2d+t^2)),
\end{align*}
where the last line follows from \eqref{eq:in proof somewhere}. 
Since the net can be chosen such that $|\mathcal{N}|\leq 9^d$, cf.~\cite{Vershynin:2012fk}, we obtain
\begin{equation*}
\mathbb{P}(\max_{x\in\mathcal{N}}|\frac{1}{n}\sum_{j=1}^n \frac{d}{k} \|P_{V_j}(x)\|^2-1|\geq\varepsilon/2) \leq 9^d 2\exp(-\frac{c}{16}(C^2d+t^2))\leq 2\exp(-\frac{c}{16}t^2),
\end{equation*}
where we assume $C\geq 4\sqrt{\ln(9)/c}$. The latter does not cause any trouble because $c$ is a constant independent of $\varepsilon$. This finally yields 
\begin{equation*}
\mathbb{P}(s_{\max}(P)\geq \sqrt{n}+C\sqrt{d}+t)\leq 2\exp(-\frac{c}{16}t^2).
\end{equation*}

The estimates on $s_{\min}(P)$ are derived analogously. 
\end{proof}

We can now prove Theorem \ref{th:singular}:
\begin{proof}[Proof of Theorem \ref{th:singular}]
Since any positive semidefinite matrix $X$ can be written by means of its projectors on eigenspaces, it is sufficient to verify 
\begin{equation*}
1-r\leq \frac{1}{n}\|\mathcal{F}_n(xx^*)\|_{\ell_1} \leq 1+r,\quad\forall x\in S^{d-1},
\end{equation*}
in place of \eqref{eq:l1 norms}. We observe that $\|Px\|^2=\|\mathcal{F}_n( xx^*)\|_{\ell_1}$, so that 
\begin{equation*}
s^2_{\min}(P)\leq \|\mathcal{F}_n(xx^*)\|_{\ell_1}\leq s^2_{\max}(P)
\end{equation*}
holds. First, we take care of the upper bound. According to Proposition \ref{th:singular vershynin}, we have
\begin{equation*}
\frac{1}{n}\|\mathcal{F}_n(P_x)\|_{\ell_1}\leq \frac{1}{n} s^2_{\max}(P)\leq (1+\frac{1}{\sqrt{n}}(C\sqrt{d}+t))^2,
\end{equation*}
with probability at least $1-2e^{-ct^2}$. Choose $\varepsilon>0$ such that $r/4=\varepsilon^2 +\varepsilon$ and observe that $\varepsilon\geq \frac{r}{5}$, so that $n\geq c_1 r^{-2}d$ with $c_1=25C^2$ implies $n\geq \varepsilon^{-2} C^2 d$. For $t=\sqrt{n}\varepsilon$, we obtain that 
\begin{equation*}
\frac{1}{\sqrt{n}}s_{\max}\leq (1+2\varepsilon)
\end{equation*}
holds with probability at least $1-2e^{-cn\varepsilon^2}$. Hence, we have $\frac{1}{n}s^2_{\max}\leq (1+r)$ with the same probability. Since $\varepsilon^2\geq r^2/25$, we can adjust $c_1$ such that $n\geq \frac{25}{cr^2}\ln(2)$ so that $c_2>0$ exists and the required upper estimate holds with probability $1-e^{-c_2r^2 n}$, The lower estimate can be derived in an analogous way. 
\end{proof}

\section{Proof of Theorem \ref{th:ck}} \label{app:3}

\begin{proof}[Proof of Theorem \ref{th:ck}]
It is sufficient  to consider $\|X\|_\infty=1$, so that $X=P_{z_1}-tP_{z_2}$, where $z_1, z_2\in S^{d-1}$ and $z_1\perp z_2$ and $t\in [-1,1]$. We observe
\begin{equation*}
\frac{1}{n}\|\mathcal{F}_n(X)\|_1 = \frac{1}{n}\sum_{j=1}^n \frac{d}{k}|\|P_{V_j}(z_1)\|^2 - t \|P_{V_j}(z_2)\|^2|= \frac{1}{n}\sum_{j=1}^n \xi_j,
\end{equation*}
where $\xi_j=\frac{d}{k}|\|P_{V_j}(z_1)\|^2 - t \|P_{V_j}(z_2)\|^2|. 
$ 
Since $|t|$ is bounded, Lemma \ref{lemma:2xi} implies that $\xi_j$ is sub-exponential. Therefore, the Bernstein inequality as stated in \cite{Vershynin:2012fk} yields
\begin{equation*}
\mathbb{P}(|\frac{1}{n}\|\mathcal{F}_n(X)\|_1-\mathbb{E}\xi|\geq  \varepsilon ) \leq 2\exp(-c n \min(\frac{\varepsilon^2}{4},\frac{\varepsilon}{2})),
\end{equation*}
where $c>0$ is an absolute constant. Proposition \ref{lemma:ck} yields $\mathbb{E}\xi_j\geq u$, and, for $\varepsilon<2$, we derive 
\begin{equation*}
\frac{1}{n}\|\mathcal{F}_n(X)\|_1\geq u-\varepsilon,
\end{equation*}
with probability at least $1-2\exp(-C_1n\varepsilon^2)$, where $C_1=c/4$. The choice $\varepsilon=u r$ establishes the required estimate at least for fixed $X\in T$ with probability at least $1-2\exp(-C_2n r^2)$, where $C_2=C_1u^2$. The remaining part of the proof is the same covering argument as in \cite{Candes:uq}, so we omit this.
\end{proof}

\section{Proof of Theorem \ref{th:second condition new}}\label{app:3.5}
\begin{proof}[Proof of Theorem \ref{th:second condition new}]
As in the proof of Theorem \ref{th:last one BB}, we first consider $x={e_1}$. Let us split $Y=Y^{(0)}-Y^{(1)}$ into
\begin{equation*}
Y^{(0)} =\frac{1}{n}\sum_{j=1}^n \alpha\frac{d}{k}P_{V_j},  \qquad Y^{(1)} = \frac{1}{n}\sum_{j=1}^n d\|P_{V_j}(e)\|^21_{E_j}\frac{d}{k}P_{V_j}.
\end{equation*}
First, we shall estimate $\|Y^{(0)}_{T^\perp}-\alpha I_{T^\perp}\|_\infty$, later also $\|Y^{(1)}_{T^\perp}-b_0I_{T^\perp}\|_\infty$ for some special number $b_0$. We observe that $\mathbb{E}Y^{(0)} = \alpha I$. By using $P:=\sqrt{\frac{d}{k}}\big( P_{V_1} ,\ldots, P_{V_n}\big) ^*$ as in Proposition \ref{th:singular vershynin} and squaring the estimates there, we see that 
\begin{equation*}
(\sqrt{n}-C_1\sqrt{d}-t)^2\leq s^2_{\min}(P)\leq s^2_{\max}(P)\leq (\sqrt{n}+C_1\sqrt{d}+t)^2
\end{equation*}
with probability at least $1-2e^{-c_1t^2}$. 
Since $\frac{\alpha}{n}P^*P=Y^{(0)} $, the latter implies at least for sufficiently small $t/\sqrt{n}$:
\begin{equation*}
\|Y^{(0)}-\alpha I\|_\infty \leq \alpha(C_1^2d+t^2+2\sqrt{nd}+2\sqrt{n}t+2C_1t\sqrt{d})
\end{equation*}
with the same probability.  
For all $\varepsilon_1>0$, there is $c_2$ sufficiently large and $\varepsilon_2>0$ sufficiently small such that $t=\varepsilon_2\sqrt{n}$ yields
\begin{equation*}
\|Y^{(0)}-\alpha I\|_\infty \leq  \alpha\varepsilon_1,
\end{equation*}
for all $n\geq c_2 d$ with probability $1-e^{-c_3n}$. In particular, we have 
\begin{equation}\label{est a}
\|Y^{(0)}_{T^\perp}-\alpha I_{T^\perp}\|_\infty \leq  \alpha\varepsilon_1
\end{equation}
with the same probability. 

Let us now take care of $Y^{(1)}_{T^\perp}$. Due to the unitary invariance of $\sigma_k$, \eqref{eq:cubature representation} for $X=P_{e_1}$ yields 
\begin{equation*}
\mathbb{E}(d\|P_{V_j}({e_1})\|^2 1_{E_j} \frac{d}{k}P_{V_j}) =a_0P_{e_1}+b_0I,
\end{equation*}
for some constants $a_0, b_0>0$ that depend on $\beta_\gamma$. 
Therefore, we have 
$ 
\mathbb{E}Y^{(1)}_{T^\perp} =b_0 I.
$ 
The random matrix 
\begin{equation*}
X_j=\frac{d^2}{k}\|P_{V_j}({e_1})\|^2 1_{E_j}(P_{V_j})_{T^\perp}-b_0 I_{T^\perp}
\end{equation*}
is bounded, say by $K$. We find a constant $C_2>0$ such that $\|\mathbb{E}X_j^*X_j\|_\infty\leq C_2$ implying $\|\sum_{j=1}^n \mathbb{E}X_j^*X_j\|_\infty\leq nC_1$. According to \cite[Theorem 5.29]{Vershynin:2012fk}, we have, for all $t>0$,
\begin{equation*}
\mathbb{P}\big( \|\frac{1}{n}\sum_{j=1}^n X_j\|_\infty\geq \frac{t}{n} \big)\leq 2 d e^{\frac{-t^2/2}{nC_2+Kt/3}}.
\end{equation*}
By choosing $\varepsilon_2>0$ and $t=\varepsilon_3 n$, we derive 
\begin{equation*}
\mathbb{P}\big( \|\frac{1}{n}\sum_{j=1}^n X_j\|_\infty\geq \varepsilon_2 \big)\leq 2 d e^{-c_4n}\leq e^{-c_5n},
\end{equation*}
for all $n\geq c_6\ln(d)$. Thus, we obtain
\begin{equation}\label{est b}
\|Y^{(1)}_{T^\perp}-b_0I_{T^\perp}\|_\infty \leq \varepsilon_2 ,
\end{equation}
with probability $1-e^{-c_5n}$, for all $n\geq c_6\ln(d)$.

Combining \eqref{est a} and \eqref{est b} implies
\begin{equation*}
\|Y_{T^\perp} -(\alpha-b_0)I_{T^\perp}\|\leq \alpha\varepsilon_1 + \varepsilon_2
\end{equation*}
with probability at least $1-e^{-Cn}$, for all $n\geq c d$. We can now choose $\varepsilon_1,\varepsilon_2$ sufficiently small, such that $\alpha\varepsilon_1 + \varepsilon_2\leq \varepsilon$. The term $\alpha$ is bounded by $k+2$. According to Vershynin's lecture note on nonasymptotic random matrix theory (Lemma 9 in Lecture 4 on dimension reduction), we have, for all $\beta_\gamma\geq 1/2$ that $\mathbb{P}\big(E^c_j\big)\leq 2e^{k/2}e^{-k\beta_\gamma}$. Since $\mathbb{E}(\frac{d^4}{k^2}\|P_{V_j}({e_1})\|^8)$ is bounded independently of $d$, see \eqref{eq:equalities for higher moments}, the term $k+2-\alpha=\mathbb{E}(\frac{d^2}{k}\|P_{V_j}({e_1})\|^4 1_{E^c_j})$ can be made arbitrarily small by choosing $\beta_\gamma$ sufficiently large. Thus, we can derive $\alpha\geq k+5/3$. Similar arguments yield that $b_0$ gets closer to $b$ when we increase $\beta_\gamma$. With $b\leq k$ we can assume that $b_0\leq k+1/6$, so that $\delta=\alpha-b_0\geq 3/2$. 

We still need to address general vectors $x\in S^{d-1}$. With the notation and arguments at the end of the proof of Theorem \ref{th:last one BB}, we observe that $\|(Y_x)_{T_x^\perp}-\delta I_{T_x^\perp}\|_\infty = \|(Y_{e_1})_{T_{e_1}^\perp}-\delta I_{T_{e_1}^\perp}\|_\infty$, which concludes the proof.  
\end{proof}

\section{Proof of Theorem \ref{th:finale!}}\label{app:4}
We can now assemble all of our findings to verify that the conditions in Theorem \ref{th:candes} hold with the required probability:
\begin{proof}[Proof of Theorem \ref{th:finale!}]
We first fix $x\in S^{d-1}$. Then we choose $r\in(0,1)$ and $\gamma<u\frac{1-r}{1+r}$, where $u\in(0,1)$ as in Proposition \ref{lemma:ck}. Let $c_i$ and $C_i$, $i=1,\ldots, 4$, be suitable positive constants. 
Theorem \ref{th:singular} yields that Condition \eqref{eq:1} holds with probability of failure at most $e^{-C_1 n}$, for all $n\geq c_1 d$. Theorem \ref{th:ck} implies that Condition \eqref{eq:2} holds with probability of failure at most $e^{-C_2 n}$, for all $n\geq c_2d$. According to Theorem \ref{th:last one BB}, the first condition in \eqref{eq:3} holds with probability of failure at most $e^{-C_3 n}$, for all $n\geq c_3d$. Theorem \ref{th:second condition new} yields that the second condition in \eqref{eq:3} is satisfied with probability of failure at most $e^{-C_4 n}$, for all $n\geq c_4d$. 

Finally, there are constants $c,C>0$ such that, for all $n\geq cd$, we can estimate $\sum_{i=1}^4 e^{-C_i n} \leq e^{-Cn}$, so that all conditions in Theorem \ref{th:candes} are satisfied with probability at least $1-e^{-Cn}$. In order to turn the latter into a uniform estimate in $x$, we take an $\epsilon$-net $\mathcal{N}_{\epsilon}$ on the sphere of cardinality less or equals $(1+\frac{2}{\epsilon})^d$, cf.~\cite[Lemma 5.2]{Vershynin:2012fk}. Since $ (1+\frac{2}{\epsilon})^de^{-Cn} \leq e^{-\tilde{C}n}$, for all $n\geq \tilde{c}d$ when $\tilde{C}$ is sufficiently small and $\tilde{c}$ sufficiently large, we have a uniform estimate for the net $\mathcal{N}_{\epsilon}$. Now, to any arbitrary $x\in S^{d-1}$, we find $x_0\in \mathcal{N}_{\epsilon}$ with $\|x-x_0\|\leq \epsilon$. By following the lines in \cite[Proof of Theorem 1.2]{Candes:2012fk}, one can derive that the certificate for $x_0$ also works for $x$, so that we can conclude the proof of Theorem \ref{th:finale!}.
\end{proof}

\end{document}